\newcommand{\sgoal}{\textsc{SGoal}}
\newcommand{\sgoals}{\textsc{SGoal}s}
\providecommand{\tabularnewline}{\\}
\providecommand{\algorithmname}{Algorithm}
\theoremstyle{plain}
\newtheorem{thm}{\protect\theoremname}
\theoremstyle{definition}
\theoremstyle{plain}
\newtheorem{lem}[thm]{\protect\lemmaname}
\theoremstyle{plain}
\newtheorem{prop}[thm]{\protect\propositionname}
\theoremstyle{remark}
\theoremstyle{plain}
\newtheorem{cor}[thm]{\protect\corollaryname}
\providecommand{\corollaryname}{Corollary}
\providecommand{\definitionname}{Definition}
\providecommand{\lemmaname}{Lemma}
\providecommand{\propositionname}{Proposition}
\providecommand{\remarkname}{Remark}
\providecommand{\theoremname}{Theorem}
\newcommand{\myreplace}{\mathbin{\smash{%
\raisebox{0.35ex}{%
            $\underset{\raisebox{0.5ex}{$\smash ,$}}{\smash+}$%
            }%
        }%
    }%
}
\newcommand{\eqtriangle}{\mathbin{\smash{%
\raisebox{0.35ex}{%
            $\underset{\raisebox{0.5ex}{$\smash -$}}{\smash\triangleleft}$%
            }%
        }%
    }%
}
\begin{document}

\title{Non-Stationary Stochastic Global Optimization Algorithms}

\author{Jonatan Gomez
\and
Carlos Rivera
}

\institute{  J. G\'omez \at Computer Systems Engineering \\
                Engineering School \\ Universidad Nacional de Colombia \\
              \email{jgomezpe@unal.edu.co}           
              \\
              \and
              C. Rivera \at Computer Systems Engineering \\
              Engineering School \\ Universidad Nacional de Colombia \\
              \email{cariveram@unal.edu.co} 
}

\maketitle

\begin{abstract}
     \cite{GOMEZ201953} proposes a formal and systematic approach for characterizing stochastic global optimization algorithms. Using it, Gomez formalizes algorithms with a fixed next-population stochastic method, i.e., algorithms defined as stationary Markov processes. These are the cases of standard versions of hill-climbing, parallel hill-climbing, generational genetic, steady-state genetic, and differential evolution algorithms. This paper continues such a systematic formal approach. First, we generalize the sufficient conditions convergence lemma from stationary to non-stationary Markov processes. Second, we develop Markov kernels for some selection schemes. Finally, we formalize both simulated-annealing and evolutionary-strategies using the systematic formal approach.
    \keywords{ Evolutionary Algorithms \and Non-stationary Markov Kernel \and Convergence Analysis \and Evolutionary Strategies \and Simulated Annealing \and Selection Mechanism}
    \subclass{MSC 68T20 \and MSC 65K10}
\end{abstract}

\section{Introduction}
    This section provides a brief introduction to the systematic formalization proposed by \cite{GOMEZ201953}. Such systematic formalization of stochastic global optimization algorithms (\textbf{\sgoals} in short), is carried on Markov kernels terms. Gomez can formalize {\sgoals} with fixed \textsc{NextPop} stochastic method, i.e., {\sgoals} that can be characterized as stationary Markov processes. That is the case of the hill-climbing  (\cite{Russell:2009:AIM:1671238}), the parallel hill-climbing, the generational genetic  (\cite{DeJong75,Holland75,Mitchell96}), the steady-state genetic (\cite{GOLDBERG199169}), and the differential evolution (\cite{DasDE2011,Storn:1997:DEN:596061.596146}) algorithms. However, {\sgoals} such as the Simulated Annealing (\cite{Kirkpatrick671}), Evolutionary Strategies (\cite{Beyer2002}), or any algorithm using parameter control/adaptation techniques (\cite{Eiben99}) cannot be characterized as stationary Markov processes. 

    \subsection{Stochastic Global Optimization}
        The problem of finding a point $x^{*}\in\varOmega\subseteq\varPhi$ where a function $f\vcentcolon\varPhi\rightarrow\mathbb{R}$ reaches its best/optimal value ($f^{*}$), is considered as a global optimization problem, see equation \ref{eq:GlobalMin}. Here, $\varPhi$ is the solution space, $\varOmega$ is the feasible region, $x^{*}$ is the optimizer, $f$ is the objective function, and $\vartriangleleft$ is the optimization relation: $\vartriangleleft$ is $<$ if minimizing and it is $>$ if maximizing.
        \begin{equation}
            \label{eq:GlobalMin}
            optimize\left(f:\varPhi\rightarrow\mathbb{R}\right)=x^*\in\varOmega\subseteq\varPhi\mid\left(\forall y\in\varOmega\right)\left(f\left(x^*\right) \trianglelefteq f\left(y\right)\right)
        \end{equation}

       A stochastic global optimization algorithm ({\sgoal}) iteratively generates a (possibly) better population of candidate solutions using a stochastic operation, see algorithm \ref{Alg:SGoal}. Here, $\textsc{InitPop}:\mathbb{N}\rightarrow\varOmega^{n}$ initializes a population $P$ having size $n$, \textit{\textsc{$\textsc{NextPop}:\varOmega^{n}\rightarrow\varOmega^{n}$}} stochastically generates a new population from the current one, \textsc{$\textsc{Best}:\varOmega^{n}\rightarrow\varOmega$} obtains the fittest individual (see equation \ref{eq:Best}), and \textit{\textsc{$\textsc{End}:\varOmega^{n}\times\mathbb{N}\rightarrow Bool$}} is a stopping condition.  Notice that if there is a Markov kernel characterizing the \textsc{NextPop} method, the stochastic sequence $\left(P_{t}\vcentcolon t\geq0\right)$ becomes a Markov process.

        \begin{algorithm}
            \caption{\label{Alg:SGoal}Stochastic Global Optimization Algorithm - \sgoal.}
            \begin{flushleft}\sgoal($n$)\end{flushleft} 
            \begin{algorithmic}[1]
                \State $t=0$
                \State $P_{0}$ = \textsc{InitPop}($n$) 
                \While{\textsc{\textlnot End}($P_{t}$ , $t$)}
                    \State $P_{t+1}$ = \textsc{NextPop}( $P_{t}$ )
                    \State $t=t+1$
                \EndWhile
                \State \textbf{return} \textsc{best}($P_{t}$)
            \end{algorithmic}
        \end{algorithm}

        \begin{equation}
            \textsc{Best}\left(x\right)=x_{i}\mid\forall_{k=1}^{n}f\left(x_{i}\right)\trianglelefteq f\left(x_{k}\right)\land f\left(x_{i}\right)\vartriangleleft\forall_{k=1}^{i-1}f\left(x_{k}\right)\label{eq:Best}
        \end{equation}

    \subsection{Markov Process}
        A function $K:\varOmega_{1}\times\Sigma_{2}\rightarrow\left[0,1\right]$, with $\left(\varOmega_{1},\Sigma_{1}\right)$ and $\left(\varOmega_{2},\Sigma_{2}\right)$ measurable spaces, is called a \textbf{(Markov) kernel} if the following two conditions hold:
        \begin{enumerate}
            \item  Function $K_{x,\bullet}\vcentcolon A\mapsto K(x,A)$ is a probability measure for each fixed $x\in\varOmega_{1}$
            \item  Function $K_{\bullet,A}\vcentcolon x\mapsto K(x,A)$ is a measurable function for each fixed $A\in\Sigma_{2}$. 
        \end{enumerate}

        Gomez considers kernels having transition densities. If the transition density $K\colon\varOmega_{1}\times\varOmega_{2}\rightarrow\left[0,1\right]$ exists, then the transition kernel can be defined using equation \ref{eq:kernel-density}. 

        \begin{equation}
            \label{eq:kernel-density}
            K\left(x,A\right)=\int_{A}K\left(x,y\right)dy
        \end{equation}

        Composition of two kernels ($K_{1}$ and $K_{2}$) is defined in terms of the kernel multiplication operator, see equation \ref{eq:multiplyKernels}. Since the kernel multiplication is an associative operator \cite{FristedtGray97}, the ordered composition of $n$ transition kernels $K_{1}$, ..., $K_{n}$ is the product kernel $K_{n}\circ K_{n-1}\circ\ldots\circ K_{1}$.
        
        \begin{equation}
            \label{eq:multiplyKernels}
            \left(K_{2}\circ K_{1}\right)\left(x,A\right)=\int K_{2}\left(y,A\right)K_{1}\left(x,dy\right)
        \end{equation}

       Finally, the probability to transit to some set $A\in\Sigma$ within $t$ steps when starting at state $x\in\varOmega$, using kernel $K$, is given by equation \ref{eq:Kernel-Iteration}. While the probability that such a Markov process is in set $A\in\Sigma$ at step $t\geq0$, when $p:\Sigma\rightarrow\left[0,1\right]$ is the initial distribution of subsets, is given by equation \ref{eq:kernel-transition2}.

        \begin{equation}
            \label{eq:Kernel-Iteration}
            K^{\left(t\right)}\left(x,A\right)=
            \begin{cases}
                K\left(x,A\right) &  \text{if } t=1\\
                {\displaystyle \intop_{\varOmega}}K^{\left(t-1\right)}\left(y,A\right)K\left(x,dy\right) &  \text{if } t>1
            \end{cases}
        \end{equation}

        \begin{equation}
            \label{eq:kernel-transition2}
            Pr\left\{ X_{t}\in A\right\} =
            \begin{cases}
                p\left(A\right) & \text{if } t=0\\
                {\displaystyle \intop_{\varOmega}}K^{\left(t\right)}\left(x,A\right)p\left(dx\right) & \text{if } t>0
            \end{cases}
        \end{equation}

    \subsection{Convergence}
        \cite{GOMEZ201953} amends the convergence approach of \cite{Rudolph96convergenceof} by defining the set of $\epsilon$-states, i.e., a set with closeness function value less than $\epsilon\in\mathbb{R}^{+}$. Let $\varOmega\subseteq\varPhi$ be a set, $f\vcentcolon\varPhi\rightarrow\mathbb{R}$ be an objective function, $\epsilon>0$ be a real number, $x\in\varOmega^{m}$, with $m$ the size of the population, and $d\left(x\right)=f\left(\mbox{\textsc{Best}}\left(x\right)\right)-f^{*}$.

        \begin{enumerate}
            \item $x$ is an $\epsilon$\textbf{-state} iff $x\in\varOmega_{\epsilon}^{m}=\left\{ x\in\varOmega^{m}\vcentcolon d\left(x\right)<\epsilon\right\} $, 
            \item $x$ is an $\overline{\epsilon}$\textbf{-state (closed)} iff $x\in\varOmega_{\overline{\epsilon}}^{m}=\left\{ x\in\varOmega^{m}\vcentcolon d\left(x\right)\leq\epsilon\right\} $,
            \item $x$ is an $\widehat{\epsilon}$\textbf{-state  (adherent)} iff $x\in\varOmega_{\widehat{\epsilon}}^{m}=\left\{ x\in\varOmega^{m}\vcentcolon d\left(x\right)=\epsilon\right\} $.
        \end{enumerate}
        
        \begin{prop}
            \label{lem:ZeroConvergence}
            Let $P_{t}\in\varOmega^{n}$ be the population maintained by an {\sgoal}. A {\sgoal} converges to the global optimum if its associated random sequence $\left(D_{t}=d\left(P_{t}\right)\vcentcolon t\geq0\right)$, converges completely to zero, i.e., equation \ref{eq:completely-0} holds for every $\epsilon>0$.
            \begin{equation}
                \label{eq:completely-0}
                \underset{t\rightarrow\infty}{\lim}{\displaystyle \sum_{i=1}^{t}Pr\left\{     \left|D_{t}\right|>\epsilon\right\} }<\infty
            \end{equation}
        \end{prop}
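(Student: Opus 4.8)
The plan is to show that complete convergence of $(D_t)$ to zero is equivalent to (indeed, implies) almost sure convergence of $(D_t)$ to zero, which is precisely the assertion that the \sgoal{} reaches the global optimum in the limit. The engine of the argument is the first Borel--Cantelli lemma. First I would record the elementary but essential observation that, by the definition of $d$, we have $D_t = f(\textsc{Best}(P_t)) - f^{*} \geq 0$ for every $t$, since $f^{*}$ is the optimal value of $f$ over $\varOmega$; hence $\left|D_t\right| = D_t$ and the events of interest are simply $A_t^{\epsilon} = \{D_t > \epsilon\}$.

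Next, fix $\epsilon > 0$. The hypothesis, equation \ref{eq:completely-0} (with the summation index carried on $D_i$), states that $\sum_{t=1}^{\infty} Pr\{A_t^{\epsilon}\} < \infty$. Applying the first Borel--Cantelli lemma to the sequence of events $\left(A_t^{\epsilon} : t \geq 1\right)$ yields $Pr\{\limsup_t A_t^{\epsilon}\} = 0$; that is, with probability one only finitely many of the events $A_t^{\epsilon}$ occur, so there exists a (random) index $T$ beyond which $D_t \leq \epsilon$ for all $t \geq T$.

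To upgrade this ``for each fixed $\epsilon$'' statement into genuine almost sure convergence, I would choose a countable sequence of thresholds $\epsilon_k = 1/k \downarrow 0$. Each exceptional set $N_k = \limsup_t A_t^{\epsilon_k}$ has probability zero by the previous step, so the countable union $N = \bigcup_k N_k$ is again a null set. On the complementary event, of probability one, for every $k$ one eventually has $D_t \leq 1/k$; letting $k \to \infty$ forces $D_t \to 0$. Therefore $D_t \to 0$ almost surely, equivalently $f(\textsc{Best}(P_t)) \to f^{*}$ almost surely, which is exactly convergence of the \sgoal{} to the global optimum.

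I expect the only delicate point to be this last passage from the per-$\epsilon$ conclusion to almost sure convergence: Borel--Cantelli is applied separately for each threshold, and one must verify that the associated null sets can be aggregated over a countable, monotone, and dense family of thresholds without forfeiting the probability-one guarantee. The non-negativity of $D_t$ and the monotonicity of the events $A_t^{\epsilon}$ in $\epsilon$ make this aggregation clean, and everything else reduces to the standard Borel--Cantelli estimate.
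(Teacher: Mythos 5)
Your argument is correct, but there is nothing in the paper to compare it against: Proposition \ref{lem:ZeroConvergence} is stated without proof, recalled as background from \cite{GOMEZ201953} (which in turn adapts Rudolph's convergence framework). What you have written is the classical argument that complete convergence implies almost sure convergence: non-negativity of $D_t$ reduces $\left|D_t\right|$ to $D_t$, the first Borel--Cantelli lemma applied at a fixed threshold $\epsilon$ shows that $\{D_t>\epsilon\}$ occurs only finitely often almost surely, and aggregating over the countable family $\epsilon_k=1/k$ (a countable union of null sets being null) yields $D_t\rightarrow 0$ almost surely, i.e.\ $f\left(\textsc{Best}\left(P_t\right)\right)\rightarrow f^{*}$. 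You also correctly read the summand in equation \ref{eq:completely-0} as $Pr\left\{\left|D_i\right|>\epsilon\right\}$ despite the paper's index typo. Two small caveats, neither fatal: first, $D_t\geq 0$ as you assert only in the minimization case, since the paper defines $d\left(x\right)=f\left(\textsc{Best}\left(x\right)\right)-f^{*}$ without an absolute value; for maximization the sign flips, but the statement's use of $\left|D_t\right|$ absorbs this, so your events $A_t^{\epsilon}$ should simply be defined with $\left|D_t\right|$ throughout. Second, the paper never formally defines ``converges to the global optimum,'' so your identification of that phrase with almost sure convergence of $D_t$ to zero is an (entirely reasonable and standard) interpretive choice rather than a deduction; in Rudolph's original treatment complete convergence is essentially taken as the definition, in which case the proposition is closer to a convention than a theorem. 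With that understanding, your proof is sound and supplies exactly the missing justification.
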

        
\section{Generalizing the Systematic Formal Approach to Non-Stationary \sgoals}
    For a \textbf{non-stationary (or non-homogeneous)} Markov process, the transition probabilities (kernel) may change over time (\cite{Bowerman74}). Suppose that $K_{t}$ is the transition kernel applied at time $t>0$ of a non-stationary Markov process. Then, the transition kernel of such non-stationary Markov process at time $t$ is defined as $K^{\left(t\right)}=K_{t}\circ K_{t-1}\circ\ldots\circ K_{1}$. Clearly, we can rewrite the transition kernel of a non-stationary Markov process (equation \ref{eq:NS-Kernel-Iteration}) to resemble equation \ref{eq:Kernel-Iteration}.
    
    \begin{equation}
        \label{eq:NS-Kernel-Iteration}
        K^{\left(t\right)}\left(x,A\right)=
        \begin{cases}
            K_1\left(x,A\right) &  \text{if } t=1\\
            {\displaystyle \intop_{\varOmega}}K^{\left(t-1\right)}\left(y,A\right)K_t\left(x,dy\right) &  \text{if } t>1
        \end{cases}
    \end{equation}

    Now we are in the position of generalizing Lemma 71 in \cite{GOMEZ201953} to non-stationary Markov processes.
    
    \begin{lem}
        \label{lem:(Gomez-1)}
        If for all $t\geq1$ we have that  $K_t\left(x,\varOmega_{\epsilon}\right)\geq\delta>0$ for all $x\in\Omega_{\epsilon}^{c}$ and $K_t\left(x,\varOmega_{\epsilon}\right)=1$ for all $x\in\varOmega_{\epsilon}$, then $K^{\left(t\right)}\left(x,\varOmega_{\epsilon}\right)\geq1-\left(1-\delta\right)^{t}$ holds for $t\geq1$.
    \end{lem}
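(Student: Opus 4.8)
The plan is to argue by induction on $t$, driving everything through the recursive definition of the non-stationary kernel in equation \ref{eq:NS-Kernel-Iteration}. The base case $t=1$ is immediate from the hypotheses: for $x\in\varOmega_\epsilon^c$ we have $K^{(1)}(x,\varOmega_\epsilon)=K_1(x,\varOmega_\epsilon)\geq\delta=1-(1-\delta)^1$, and for $x\in\varOmega_\epsilon$ we have $K_1(x,\varOmega_\epsilon)=1\geq1-(1-\delta)$, so the bound holds from every starting state.

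Before the main inductive step I would first record an auxiliary absorbing property: for every $x\in\varOmega_\epsilon$ and every $t\geq1$, $K^{(t)}(x,\varOmega_\epsilon)=1$. This is itself a short induction. Since $K_t(x,\cdot)$ concentrates all its mass on $\varOmega_\epsilon$ whenever $x\in\varOmega_\epsilon$, the integral in equation \ref{eq:NS-Kernel-Iteration} collapses to $\int_{\varOmega_\epsilon}K^{(t-1)}(y,\varOmega_\epsilon)\,K_t(x,dy)$, and the inductive hypothesis makes the integrand identically $1$ on $\varOmega_\epsilon$, giving value $K_t(x,\varOmega_\epsilon)=1$. Consequently states already inside $\varOmega_\epsilon$ trivially satisfy $K^{(t)}(x,\varOmega_\epsilon)=1\geq1-(1-\delta)^t$, so only the case $x\in\varOmega_\epsilon^c$ requires genuine work.

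For the inductive step proper, assume the bound holds at level $t-1$ for every state. Fix $x\in\varOmega_\epsilon^c$ and split the integral in equation \ref{eq:NS-Kernel-Iteration} over $\varOmega_\epsilon$ and $\varOmega_\epsilon^c$. On $\varOmega_\epsilon$ the absorbing property gives $K^{(t-1)}(y,\varOmega_\epsilon)=1$, while on $\varOmega_\epsilon^c$ the inductive hypothesis gives $K^{(t-1)}(y,\varOmega_\epsilon)\geq1-(1-\delta)^{t-1}$. Writing $p=K_t(x,\varOmega_\epsilon)\geq\delta$ and bounding each piece below by these constants yields
\begin{equation*}
K^{(t)}(x,\varOmega_\epsilon)\geq p+\left(1-(1-\delta)^{t-1}\right)(1-p)=1-(1-\delta)^{t-1}(1-p).
\end{equation*}
Since $1-p\leq1-\delta$, the right-hand side is at least $1-(1-\delta)^t$, which closes the induction.

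I do not anticipate a deep obstacle: the argument is essentially the stationary proof of Gomez's Lemma 71 with the single fixed kernel replaced by the time-indexed family $K_t$. The one point demanding care is that the two hypotheses---$K_t(x,\varOmega_\epsilon)\geq\delta$ on $\varOmega_\epsilon^c$ and $K_t(x,\varOmega_\epsilon)=1$ on $\varOmega_\epsilon$---are assumed \emph{uniformly} for all $t\geq1$; this uniformity is exactly what keeps both the inductive hypothesis on $K^{(t-1)}$ and the absorbing property valid even though the transition kernel changes at each step. The sole algebraic subtlety is the final monotonicity remark, namely that $(1-\delta)^{t-1}(1-p)$ is largest when $p$ is smallest, i.e.\ $p=\delta$, which is precisely what forces the clean geometric bound $1-(1-\delta)^t$.
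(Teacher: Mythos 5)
Your proof is correct and follows essentially the same route as the paper's: induction on $t$, splitting the integral in equation \ref{eq:NS-Kernel-Iteration} over $\varOmega_{\epsilon}$ and $\varOmega_{\epsilon}^{c}$, and closing with the bound $1-(1-\delta)^{t-1}(1-p)\geq1-(1-\delta)^{t}$. The only difference is that you explicitly prove the absorbing property $K^{(t)}(y,\varOmega_{\epsilon})=1$ for $y\in\varOmega_{\epsilon}$ by a separate induction, which the paper simply asserts; this is a welcome bit of added rigor but not a different argument.
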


    \begin{proof} 
        We just rewrite the proof of Lemma 71 in \cite{GOMEZ201953} (Gomez uses induction on $t$) but taking care of the non-stationary property of the Markov process. For $t=1$ we have that $K^{\left(t\right)}\left(x,\varOmega_{\epsilon}\right)=K_t\left(x,\varOmega_{\epsilon}\right)$ (equation \ref{eq:Kernel-Iteration}), so $K^{\left(t\right)}\left(x,\varOmega_{\epsilon}\right)\geq\delta$ (condition lemma), therefore $K^{\left(t\right)}\left(x,\varOmega_{\epsilon}\right)\geq1-\left(1-\delta\right)^{t}$ ($t=1$ and numeric operations). Here, we will use the notation (as Gomez did) $K^{\left(t\right)}\left(y,\varOmega_{\epsilon}\right)=K_{y}^{\left(t\right)}\left(\varOmega_{\epsilon}\right)$ to reduce the visual length of the equations.

        \noindent %
        \begin{tabular}{ll}
            $K_{x}^{\left(t+1\right)}\left(\varOmega_{\epsilon}\right)$ & \tabularnewline
            $={\displaystyle \intop_{\varOmega}}K_{y}^{\left(t\right)}\left(\varOmega_{\epsilon}\right)K_t\left(x,dy\right)$ & (equation \ref{eq:Kernel-Iteration})\tabularnewline
            $={\displaystyle \intop_{\varOmega_{\epsilon}}}K_{y}^{\left(t\right)}\left(\varOmega_{\epsilon}\right)K_t\left(x,dy\right)+{\displaystyle \intop_{\varOmega_{\epsilon}^{c}}}K_{y}^{\left(t\right)}\left(\varOmega_{\epsilon}\right)K_t\left(x,dy\right)$ & ($\varOmega=\varOmega_{\epsilon}\bigcup\varOmega_{\epsilon}^{c}$)\tabularnewline
            $={\displaystyle \intop_{\varOmega_{\epsilon}}}K_t\left(x,dy\right)+{\displaystyle \intop_{\varOmega_{\epsilon}^{c}}}K_{y}^{\left(t\right)}\left(\varOmega_{\epsilon}\right)K_t\left(x,dy\right)$ & (If $y\in\varOmega_{\epsilon},\,K_{y}^{\left(t\right)}\left(\varOmega_{\epsilon}\right)=1$)\tabularnewline
            $=K_t\left(x,\varOmega_{\epsilon}\right)+{\displaystyle \intop_{\varOmega_{\epsilon}^{c}}}K_{y}^{\left(t\right)}\left(\varOmega_{\epsilon}\right)K_t\left(x,dy\right)$ & (def kernel)\tabularnewline
            $\geq K_t\left(x,\varOmega_{\epsilon}\right)+\left[1-\left(1-\delta\right)^{t}\right]{\displaystyle \intop_{A_{\epsilon}^{c}}}K_t\left(x,dy\right)$ & (Induction hypothesis)\tabularnewline
            $\geq K_t\left(x,\varOmega_{\epsilon}\right)+\left[1-\left(1-\delta\right)^{t}\right]K_t\left(x,\varOmega_{\epsilon}^{c}\right)$ & (def kernel)\tabularnewline
            $\geq K_t\left(x,\varOmega_{\epsilon}\right)+K_t\left(x,\varOmega_{\epsilon}^{c}\right)-\left(1-\delta\right)^{t}K_t\left(x,\varOmega_{\epsilon}^{c}\right)$ & \tabularnewline
            $\geq1-\left(1-\delta\right)^{t}\left(1-K_t\left(x,\varOmega_{\epsilon}\right)\right)$ & (Probability)\tabularnewline
            $\geq1-\left(1-\delta\right)^{t}\left(1-\delta\right)$ & (condition lemma)\tabularnewline
            $\geq1-\left(1-\delta\right)^{t+1}$ & \tabularnewline
        \end{tabular}
        .
    \end{proof}

    Finally, Theorem 72 in \cite{GOMEZ201953} also holds for non-stationary Markov processes. So, in order to show convergence of a non-stationary {\sgoal} it is sufficient to prove that the {\sgoal} satisfies the condition of lemma \ref{lem:(Gomez-1)}.
    
    \begin{thm}
        \label{thm:Gomez-1}
        (Theorem 72 in \cite{GOMEZ201953} - a corrected version of Theorem 1 in \cite{Rudolph96convergenceof}) A {\sgoal} whose stochastic kernel satisfies $K^{\left(t\right)}\left(x,\varOmega_{\epsilon}\right)\geq1-\left(1-\delta\right)^{t}$ for all $t\geq1$ will converge to the global optimum ($f^{*}$) of a well-defined real-valued function $f:\varPhi\rightarrow\mathbb{R}$, defined in an arbitrary space $\varOmega\subseteq\varPhi$, regardless of the initial distribution $p\left(\cdot\right)$.
    \end{thm}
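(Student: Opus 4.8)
The plan is to reduce the statement to the complete-convergence criterion of Proposition~\ref{lem:ZeroConvergence}: it suffices to show that for every $\epsilon>0$ the series $\sum_{t\geq1}Pr\left\{\left|D_t\right|>\epsilon\right\}$ converges. The whole task therefore becomes producing a summable upper bound on $Pr\left\{\left|D_t\right|>\epsilon\right\}$ out of the kernel hypothesis $K^{\left(t\right)}\left(x,\varOmega_{\epsilon}\right)\geq1-\left(1-\delta\right)^{t}$, where I read this hypothesis as holding, for each fixed $\epsilon>0$, with some associated $\delta=\delta_{\epsilon}>0$, so that Proposition~\ref{lem:ZeroConvergence} can subsequently be applied for every $\epsilon$.

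First I would translate the probabilistic event into the language of $\epsilon$-states. Since $f^{*}$ is the optimal value we have $d\left(x\right)\geq0$, hence $\left|D_t\right|=D_t$ and $\left\{D_t>\epsilon\right\}\subseteq\left\{D_t\geq\epsilon\right\}=\left\{P_t\in\varOmega_{\epsilon}^{c}\right\}$, using that $\varOmega_{\epsilon}=\left\{d<\epsilon\right\}$. Next I would invoke equation~\ref{eq:kernel-transition2} to express the probability that the process lies outside the $\epsilon$-states at step $t$ as $Pr\left\{P_t\in\varOmega_{\epsilon}^{c}\right\}=\int_{\varOmega}K^{\left(t\right)}\left(x,\varOmega_{\epsilon}^{c}\right)p\left(dx\right)$, which is precisely where the initial distribution $p$ enters the computation.

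The key estimate then comes directly from the hypothesis: since $K^{\left(t\right)}\left(x,\cdot\right)$ is a probability measure, $K^{\left(t\right)}\left(x,\varOmega_{\epsilon}^{c}\right)=1-K^{\left(t\right)}\left(x,\varOmega_{\epsilon}\right)\leq\left(1-\delta\right)^{t}$ uniformly in $x$. Integrating this constant bound against the probability measure $p$ gives $Pr\left\{D_t>\epsilon\right\}\leq\left(1-\delta\right)^{t}$, and summing the geometric series yields $\sum_{t\geq1}Pr\left\{\left|D_t\right|>\epsilon\right\}\leq\sum_{t\geq1}\left(1-\delta\right)^{t}=\frac{1-\delta}{\delta}<\infty$ because $0<\delta\leq1$. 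Proposition~\ref{lem:ZeroConvergence} then delivers convergence to $f^{*}$.

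I expect no deep analytic obstacle here; the argument is essentially a clean chain from Proposition~\ref{lem:ZeroConvergence} through equation~\ref{eq:kernel-transition2} to a geometric sum. The points that require care are bookkeeping rather than substance: matching the strict inequality in the convergence criterion with the closed complement $\varOmega_{\epsilon}^{c}=\left\{d\geq\epsilon\right\}$, which the inclusion above handles, and observing that the uniform-in-$x$ bound $\left(1-\delta\right)^{t}$ is exactly what makes the conclusion independent of $p$, since integrating a constant over any probability measure returns that constant. The only genuine assumption worth flagging is that the hypothesis supplies a positive $\delta_{\epsilon}$ for each $\epsilon>0$, which is precisely what Lemma~\ref{lem:(Gomez-1)} guarantees once the per-step conditions are verified.
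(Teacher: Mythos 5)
Your proposal is correct and is essentially the standard argument behind the result the paper defers to (its "proof" is only a pointer to Theorem 72 of \cite{GOMEZ201953}, which is Rudolph's complete-convergence argument): bound $Pr\left\{ \left|D_{t}\right|>\epsilon\right\}$ by $\int_{\varOmega}K^{\left(t\right)}\left(x,\varOmega_{\epsilon}^{c}\right)p\left(dx\right)\leq\left(1-\delta\right)^{t}$ and sum the geometric series to invoke Proposition~\ref{lem:ZeroConvergence}. Your explicit remarks that the uniform-in-$x$ bound is what removes the dependence on $p$, and that $\delta$ may depend on $\epsilon$, are exactly the right points of care and introduce no gap.
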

    \begin{proof}See proof of Theorem 72 in \cite{GOMEZ201953}.\end{proof}

\section{Selection Schemes Formalization}
    A \textbf{Selection Scheme}, is a method of selecting a group of individuals from a population (\cite{doi:10.1162/evco.1996.4.4.361}). Many schemes define an individual selection mechanism $\textsc{s1}\colon\varOmega^{\lambda}\rightarrow\varOmega$, and selects a group of individuals by repeatedly applying \textsc{s1}. In this paper, we study the uniform, fitness proportional, tournament (\cite{Miller95geneticalgorithms}), roulette, and ranking selection schemes:
    \begin{enumerate}
        \item A \textbf{uniform} scheme ($\textsc{Uniform1}\colon\varOmega^{\lambda}\rightarrow\varOmega$) gives to each candidate solution $i=1,2,\ldots,\lambda$, the same selection's probability $p\left(x_i\right)=\frac{1}{\lambda}$.
        \item A \textbf{fitness proportional} scheme ($\textsc{Proportional1}\colon\varOmega^{\lambda}\rightarrow\varOmega$) gives to each candidate solution  $i=1,2,\ldots,\lambda$, a selection's probability $p\left(x_i\right)$ such that $p\left(x_i\right) < p\left(x_j\right)$ if $f\left(x_j\right) \vartriangleleft f\left(x_i\right)$ and $p\left(x_i\right) = p\left(x_j\right)$ if $f\left(x_i\right) = f\left(x_j\right)$.
        \item A \textbf{tournament} scheme ($\textsc{Tournament1}^{m}\colon\varOmega^{\lambda}\rightarrow\varOmega$) of size $m$ chooses $m$ individuals using a \textsc{Uniform} scheme and selects an individual from these using a \textsc{Proportional1} scheme, $\textsc{Tournament1}^{m}=\textsc{Proportional1}\circ\textsc{Uniform}^{m}$ .
        \item A \textbf{roulette} scheme ($\textsc{Roulette1}\colon\varOmega^{\lambda}\rightarrow\varOmega$) is a fitness proportional one where $p\left(x_i\right)=\frac{rate(x_i)}{\sum_{i=1}^{\lambda}rate(x_i)}$ with $rate\left(x_i\right) < rate\left(x_j\right)$ if $f\left(x_j\right) \vartriangleleft f\left(x_i\right)$ and $rate\left(x_i\right) = rate\left(x_j\right)$ if $f\left(x_i\right) = f\left(x_j\right)$. If $f\left(x_i\right) \geq 0$ for all $i=1,2,\ldots,\lambda$ and maximizing then $rate\left(x_i\right)$ can be set to $f\left(x_i\right)$.
        \item A \textbf{ranking} scheme ($\textsc{Ranking1}\colon\varOmega^{\lambda}\rightarrow\varOmega$) is a roulette one with $rate\left(x_i\right)={1+|\{x_k \colon f\left(x_i\right) \vartriangleleft f\left(x_k\right) \}|}$.
    \end{enumerate}
    
    \begin{prop}
        \label{prop:S1}
        If $\textsc{s1}\colon\varOmega^{\lambda}\rightarrow\varOmega$ is a selection scheme with kernel $K_{\textsc{s1}}$ then        $\textsc{s}\colon\varOmega^{\lambda}\rightarrow\varOmega^{\mu}$ has kernel $K_{\textsc{s}}=\circledast_{i=1}^{\mu}K_{\textsc{s1}}$.
    \end{prop}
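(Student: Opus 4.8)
The plan is to show that the kernel induced by the repeated-selection operation \textsc{s} agrees, on measurable rectangles, with the $\mu$-fold product kernel $\circledast_{i=1}^{\mu}K_{\textsc{s1}}$, and then to promote this agreement to the whole product $\sigma$-algebra $\Sigma^{\otimes\mu}$ on $\varOmega^{\mu}$ by a uniqueness-of-measures argument. First I would unpack the definition of \textsc{s}: as stated in the preamble to the schemes, \textsc{s} builds an output tuple $\left(y_{1},\ldots,y_{\mu}\right)\in\varOmega^{\mu}$ by applying \textsc{s1} independently $\mu$ times to the \emph{same} fixed input population $x\in\varOmega^{\lambda}$. Hence, conditionally on $x$, the coordinates $y_{1},\ldots,y_{\mu}$ are mutually independent, each distributed according to the probability measure $K_{\textsc{s1}}\left(x,\cdot\right)$.

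Next I would recall the action of the product-kernel operator $\circledast$: for each fixed source state $x$, the measure $\bigl(\circledast_{i=1}^{\mu}K_{\textsc{s1}}\bigr)\left(x,\cdot\right)$ is the product measure $\bigotimes_{i=1}^{\mu}K_{\textsc{s1}}\left(x,\cdot\right)$ on $\left(\varOmega^{\mu},\Sigma^{\otimes\mu}\right)$, which on a measurable rectangle $A_{1}\times\cdots\times A_{\mu}$ evaluates to $\prod_{i=1}^{\mu}K_{\textsc{s1}}\left(x,A_{i}\right)$. Existence and uniqueness of this product measure for each fixed $x$ is guaranteed by the product-measure theorem, since each factor $K_{\textsc{s1}}\left(x,\cdot\right)$ is a probability measure. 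I would then verify that $\circledast_{i=1}^{\mu}K_{\textsc{s1}}$ is itself a kernel by checking the two axioms: condition~(1), that it is a probability measure for fixed $x$, is immediate because a finite product of probability measures is a probability measure; condition~(2), measurability of $x\mapsto\bigl(\circledast_{i=1}^{\mu}K_{\textsc{s1}}\bigr)\left(x,A\right)$, holds on rectangles because each $x\mapsto K_{\textsc{s1}}\left(x,A_{i}\right)$ is measurable (the kernel axiom for $K_{\textsc{s1}}$) and finite products of measurable functions are measurable, and it extends to all $A\in\Sigma^{\otimes\mu}$ by a monotone-class (Dynkin) argument.

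Finally I would match the two measures. By the conditional independence established in the first step, the probability that $\textsc{s}\left(x\right)$ lands in a rectangle $A_{1}\times\cdots\times A_{\mu}$ factors as $\prod_{i=1}^{\mu}K_{\textsc{s1}}\left(x,A_{i}\right)$, which is precisely the value of the product kernel on that rectangle. Since the measurable rectangles form a $\pi$-system generating $\Sigma^{\otimes\mu}$, and both $K_{\textsc{s}}\left(x,\cdot\right)$ and $\bigl(\circledast_{i=1}^{\mu}K_{\textsc{s1}}\bigr)\left(x,\cdot\right)$ are probability measures agreeing on this $\pi$-system, the $\pi$-$\lambda$ theorem forces them to coincide on all of $\Sigma^{\otimes\mu}$, for every $x\in\varOmega^{\lambda}$; that is, $K_{\textsc{s}}=\circledast_{i=1}^{\mu}K_{\textsc{s1}}$.

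I expect the main obstacle to lie not in any single computation but in the measure-theoretic bookkeeping: making the conditional independence of the repeated selections fully rigorous, and establishing the joint-measurability axiom~(2) for the product on the entire product $\sigma$-algebra rather than merely on the generating family of rectangles, so that the agreement on rectangles genuinely propagates to $\Sigma^{\otimes\mu}$. Once these are in place, the identification of $K_{\textsc{s}}$ with the product kernel is forced by uniqueness.
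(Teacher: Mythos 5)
The paper states this proposition without any proof at all; the nearest thing to its intended justification appears later, where the analogous result for \textsc{es} (Proposition \ref{prop:es-3}) is discharged in one line by citing the ``join stochastic method'' machinery, namely definition 55 and proposition 56 of \cite{GOMEZ201953}, which define $\circledast$ precisely as the product of the component kernels. Your argument is therefore correct but takes a genuinely more self-contained route: rather than invoking that external definition, you re-derive it from first principles --- conditional independence of the $\mu$ repeated applications of \textsc{s1} given the fixed input $x$, identification of $\bigl(\circledast_{i=1}^{\mu}K_{\textsc{s1}}\bigr)(x,\cdot)$ with the product measure $\bigotimes_{i=1}^{\mu}K_{\textsc{s1}}(x,\cdot)$, verification of both kernel axioms (with the monotone-class step for measurability in $x$ beyond rectangles), and the $\pi$--$\lambda$ uniqueness argument to promote agreement on rectangles to all of $\Sigma^{\otimes\mu}$. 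What your version buys is rigor and independence from the cited reference; what the paper's (implicit) version buys is brevity, since once $\circledast$ is \emph{defined} as the join of kernels the proposition is essentially a restatement. The one hypothesis you should make explicit, since the whole factorization rests on it, is that \textsc{s} applies \textsc{s1} to the same population $\mu$ times \emph{with replacement and independently}; for a selection scheme without replacement the coordinates would not be conditionally independent and $K_{\textsc{s}}$ would not be the product kernel. The paper's phrasing (``selects a group of individuals by repeatedly applying \textsc{s1}'') supports your reading, but it is an assumption, not a consequence.
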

    \begin{cor}
        If $\textsc{s1}$ is based on a probability function then $K_{\textsc{s}}$ is a kernel.
    \end{cor}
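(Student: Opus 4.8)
The plan is to verify directly that, under the stated hypothesis, $K_{\textsc{s}}$ satisfies the two defining conditions of a Markov kernel: that $K_{\textsc{s}}(x,\cdot)$ is a probability measure on $\varOmega^{\mu}$ for each fixed $x\in\varOmega^{\lambda}$, and that $K_{\textsc{s}}(\cdot,B)$ is measurable for each fixed $B$ in the product $\sigma$-algebra $\Sigma^{\otimes\mu}$. Since Proposition \ref{prop:S1} already gives $K_{\textsc{s}}=\circledast_{i=1}^{\mu}K_{\textsc{s1}}$, the work reduces to (i) showing that \textsc{s1} being based on a probability function makes $K_{\textsc{s1}}$ a genuine kernel, and (ii) showing that the parallel product $\circledast$ of $\mu$ copies of such a kernel is again a kernel.

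For step (i), I would write the kernel induced by a probability-function-based \textsc{s1} explicitly. Since \textsc{s1} returns the component $x_{i}$ of the population $x=(x_{1},\ldots,x_{\lambda})$ with probability $p(x_{i})$, its kernel is the finitely supported measure
\[
K_{\textsc{s1}}(x,A)=\sum_{i=1}^{\lambda}p(x_{i})\,\delta_{x_{i}}(A),
\]
a convex combination of Dirac measures. The first kernel condition is then immediate: each $\delta_{x_{i}}(\cdot)$ is a probability measure, the weights are nonnegative and, because $p$ is a probability function, sum to one, so $K_{\textsc{s1}}(x,\cdot)$ is countably additive with total mass $1$. For the second condition I would note that each selection probability $p(x_{i})$ is a measurable function of $x$ (for the schemes considered it is either constant or built from the objective values $f(x_{1}),\ldots,f(x_{\lambda})$, which are measurable because each coordinate projection is measurable), and that $x\mapsto\delta_{x_{i}}(A)=\mathbf{1}_{A}(x_{i})$ is measurable for fixed $A$; a finite sum of products of measurable functions is measurable, so $K_{\textsc{s1}}(\cdot,A)$ is measurable.

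For step (ii), for fixed $x$ the measure $K_{\textsc{s}}(x,\cdot)$ is the product of the $\mu$ probability measures $K_{\textsc{s1}}(x,\cdot)$, which is again a probability measure on $(\varOmega^{\mu},\Sigma^{\otimes\mu})$ by the standard product-measure construction. The measurability condition is the only delicate point. On measurable rectangles $B=A_{1}\times\cdots\times A_{\mu}$ one has $K_{\textsc{s}}(x,B)=\prod_{j=1}^{\mu}K_{\textsc{s1}}(x,A_{j})$, a finite product of the measurable functions from step (i), hence measurable in $x$. Extending measurability from rectangles to every $B\in\Sigma^{\otimes\mu}$ is where the main obstacle lies; in general I would resolve it with a Dynkin ($\pi$-$\lambda$) argument, showing that the family of sets $B$ for which $x\mapsto K_{\textsc{s}}(x,B)$ is measurable is a $\lambda$-system containing the $\pi$-system of rectangles, hence equal to $\Sigma^{\otimes\mu}$.

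In fact this last step collapses in the present setting, which is what makes the corollary routine once (i) is in hand. Because each $K_{\textsc{s1}}(x,\cdot)$ is supported on the finite set $\{x_{1},\ldots,x_{\lambda}\}$, the product measure is supported on the finite set of selectable tuples, so
\[
K_{\textsc{s}}(x,B)=\sum_{(i_{1},\ldots,i_{\mu})}p(x_{i_{1}})\cdots p(x_{i_{\mu}})\,\mathbf{1}_{B}(x_{i_{1}},\ldots,x_{i_{\mu}})
\]
is a finite sum over index tuples $(i_{1},\ldots,i_{\mu})\in\{1,\ldots,\lambda\}^{\mu}$ of products of functions already shown measurable, giving measurability for arbitrary $B$ with no limiting argument. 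Combining the probability-measure property with this measurability yields that $K_{\textsc{s}}$ is a kernel.
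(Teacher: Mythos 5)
The paper states this corollary without any proof at all --- it is presented as an immediate consequence of Proposition \ref{prop:S1} --- so there is no official argument to compare against; your proposal supplies the verification the paper omits, and it is correct. Your route is the natural one: write $K_{\textsc{s1}}(x,\cdot)=\sum_{i=1}^{\lambda}p(x_{i})\,\delta_{x_{i}}$ as a convex combination of Dirac measures (total mass $1$ precisely because $p$ is a probability function), check measurability of $x\mapsto K_{\textsc{s1}}(x,A)$ from measurability of the weights and of $x\mapsto\mathbf{1}_{A}(x_{i})$, and then pass to the $\mu$-fold product. Two points you handle that the paper silently assumes are worth keeping: first, the weights $p(x_{i})$ depend on the \emph{whole} population $x$ (e.g.\ through the normalization $\sum_{j}rate(x_{j})$ in the roulette scheme), so their measurability in $x$ rests on measurability of $f$ and of the rate function --- this is the only substantive hypothesis beyond ``probability function,'' and you name it. Second, your observation that the finite support of each $K_{\textsc{s1}}(x,\cdot)$ reduces $K_{\textsc{s}}(x,B)$ to a finite sum of measurable functions, so that no $\pi$-$\lambda$ extension from rectangles is actually needed, is a genuine simplification over the generic product-kernel argument and explains why the corollary really is routine. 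The one implicit assumption to make explicit is that $\circledast_{i=1}^{\mu}K_{\textsc{s1}}$ denotes \emph{independent} repeated draws of \textsc{s1}, which is what licenses the product form $p(x_{i_{1}})\cdots p(x_{i_{\mu}})$; this matches the paper's description of selecting a group ``by repeatedly applying \textsc{s1}.''
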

    \begin{cor}
        The \textsc{Uniform, Proportional, Tournament, Roulette} and \textsc{Ranking} selection schemes have Markov kernels.
    \end{cor}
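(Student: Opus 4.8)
The plan is to reduce the statement to the two results that immediately precede it. Proposition~\ref{prop:S1} shows that a kernel $K_{\textsc{s1}}$ for the single-individual mechanism $\textsc{s1}$ induces the object $K_{\textsc{s}}=\circledast_{i=1}^{\mu}K_{\textsc{s1}}$ for the full $\mu$-selection scheme $\textsc{s}$, and the first Corollary guarantees that this induced object is a genuine Markov kernel whenever $\textsc{s1}$ is based on a probability function. Hence it suffices to verify, for each of the five schemes, that its single-selection mechanism is driven by a bona fide probability mass function over the $\lambda$ candidates, meaning its entries are non-negative and sum to one.

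First I would dispatch the four schemes defined directly through a probability assignment. For \textsc{Uniform1} the mass function is $p(x_i)=\tfrac{1}{\lambda}$, which is non-negative and sums to one by inspection. For \textsc{Roulette1} and \textsc{Ranking1} the mass function $p(x_i)=rate(x_i)/\sum_{k=1}^{\lambda}rate(x_k)$ is normalized by construction, and the denominator is strictly positive because the rates are positive; in particular $rate(x_i)=1+|\{x_k:f(x_i)\vartriangleleft f(x_k)\}|\geq 1$ for \textsc{Ranking1}. For \textsc{Proportional1} the defining conditions already posit a selection probability $p(x_i)$, so the assignment is a probability function by hypothesis. In each case $x\mapsto p(x_i)$ depends on the population $x$ only through the fitness values, so the first Corollary applies and, together with Proposition~\ref{prop:S1}, yields Markov kernels for \textsc{Uniform}, \textsc{Proportional}, \textsc{Roulette}, and \textsc{Ranking}.

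The remaining scheme, \textsc{Tournament}, does not fit this template directly, since by definition $\textsc{Tournament1}^{m}=\textsc{Proportional1}\circ\textsc{Uniform}^{m}$ is a composition rather than a raw probability assignment. Here I would argue that $\textsc{Uniform}^{m}\colon\varOmega^{\lambda}\rightarrow\varOmega^{m}$ has a kernel by applying Proposition~\ref{prop:S1} to \textsc{Uniform1} with $\mu=m$, that \textsc{Proportional1} acting on the size-$m$ population has a kernel by the previous paragraph, and that the kernel multiplication of equation~\ref{eq:multiplyKernels} (associative by \cite{FristedtGray97}) composes these into a kernel for $\textsc{Tournament1}^{m}$. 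A final application of Proposition~\ref{prop:S1} then lifts this to the full \textsc{Tournament} scheme.

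The step I expect to require the most care is the \textsc{Tournament} case, because it is the only one whose single-selection mechanism arises as a composition: one must confirm that both defining properties of a kernel, the probability-measure property for a fixed input population and the measurability property for a fixed target set, are preserved under kernel multiplication, and that the intermediate selection of $m$ individuals is carried out on the correct product space $\varOmega^{m}$ before \textsc{Proportional1} acts on it. The four direct schemes are comparatively routine, since for them the hypotheses of the first Corollary are verified by inspection.
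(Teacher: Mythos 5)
Your proposal is correct and follows exactly the route the paper intends: the paper gives no explicit proof of this corollary, treating it as immediate from Proposition~\ref{prop:S1} and the preceding corollary, and your argument simply fills in the routine verifications (including the only non-obvious case, \textsc{Tournament}, which you rightly handle via kernel composition). The one detail worth keeping in mind is that for \textsc{Roulette1} the paper's definition does not by itself guarantee strictly positive rates, so your positivity assumption on the denominator is an (implicit but reasonable) hypothesis rather than a consequence of the definition.
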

    
\section{Simulated Annealing (\textsc{sa})}
    \subsection{Concept}
        The Simulated Annealing algorithm (\textsc{sa}) considers the idea behind the process of heating and cooling a material to recrystallize it, see algorithm \ref{alg:simulated}.  When the temperature decreases, the material settles into a more ordered state, and the state into which they settle is not always the same. This state tends to have low energy compared when the material is in the presence of high temperature (\cite{Simon13}). If we consider energy as a cost function, we can use this approach to minimize cost functions. Therefore, SA is an stochastic algorithm that works with a single-individual that generates a single candidate-solution $x$ (parent)  and sets a high temperature to explore the search space. Then, some variation mechanism generates a new candidate-solution $x'$ (child) and measures its cost. A replacement policy, that fitness function and the temperature, picks one individual between the father and the child. Finally, a process decreases the temperature looking for each new solution having less energy. 

        Clearly, the replacement policy in algorithm \ref{alg:simulated} (lines 6,...,11) is not elitist. This allows \textsc{sa} to expand the search but can lead to the loss of some good candidate-solutions. In practice, it is normal to keep track of the best solution found so far\cite{Simon13}. If this is done, the replacement policy is an elitist one.

        \begin{algorithm}
            \caption{\label{alg:simulated}Simulated Annealing \cite{Simon13}}
            \begin{flushleft} \Call{Simulated annealing}{} \end{flushleft}

            \begin{algorithmic}[1]
                \State $T =$ initial temperature $>$ 0
                \State $\alpha(T) = $ cooling function: $\alpha(T) \in [0,T]$ for all $T$
                \State Initialize a candidate solution $x_{0}$ to minimization problem $f(x)$
                
                \While{\textsc{\textlnot TerminationCondition}()}
                    \State Generate a candidate solution $x$
                    \State \textbf{if} $f(x) < f(x_{0})$
                    \State \indent $x_{0} = x$
                    \State \textbf{else}
                    \State \indent $r = U[0,1]$
                    \State \indent \textbf{if} $r<exp[(f(x_{0})-f(x))/T]$
                    \State \indent \indent $x_{0} = x$
                    \State $T = \alpha(T)$
                \EndWhile
            \end{algorithmic} 
        \end{algorithm}

    \subsection{Formalization}
    
        To formalize and characterize (\textsc{sa}), we use the approach proposed by \cite{GOMEZ201953}. We rewrite algorithm \ref{alg:simulated} in terms of individual non-stationary stochastic methods, see algorithm \ref{alg:simulatedvr}. This new algorithm is in terms of \Call{Variation}{}-\Call{Replacement}{} methods. Observe that algorithms \ref{alg:simulated} and \ref{alg:simulatedvr} are equivalents. Line 5  of algorithm \ref{alg:simulated} is the method \Call{Variate$_{SA}$}{} (line 1) of algorithm \ref{alg:simulatedvr};  lines 6 to 11 of algorithm \ref{alg:simulated} is the method \Call{Replace$_{SA}$}{} (line 2) of algorithm \ref{alg:simulatedvr}. Finally, line 12 of algorithm \ref{alg:simulated} and method \Call{UpdateParameters}{} (line 3) perform the same task.

        \begin{algorithm}
            \caption{\label{alg:simulatedvr}Simulated Annealing in terms of \textbf{VR} methods}
        
            \begin{flushleft} \Call{NextPop$_{SA}$}{x} \end{flushleft}
            \begin{algorithmic}[1]
                \State $x' = $\Call{Variate$_{SA}$}{x}
                \State $x' = $\Call{Replace$_{SA_{T}}$}{x',x}
                \State \Call{UpdateParameters}{T}
                \State \Return $x'$
            \end{algorithmic} 
        \end{algorithm}

        Now, we concentrate on characterizing (\textsc{sa}) as a VR stochastic method and analyzing its convergence through non-stationary Markov kernels.

        \begin{prop}
            \label{prop:sa-1}
            If \Call{Replace$_{SA}$}{$x,x$} is an elitist method, then it can be characterized by the Markov Kernel $R_{SA} \colon \Omega^{2} \times \Sigma \longrightarrow [1,0]$ defined as 

            \begin{equation}K_{R_{SA}} = \pi_{1} \circ s_{2}\end{equation}
        
        \end{prop}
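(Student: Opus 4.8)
The plan is to first pin down exactly what the elitist hypothesis forces the replacement to compute, and then to exhibit that deterministic computation as the composition $\pi_1 \circ s_2$ of two kernels. In the elitist variant of Algorithm \ref{alg:simulated} the best solution found so far is retained, so the Metropolis step (lines 8--11), which may accept a worse child with probability $\exp[(f(x_0)-f(x))/T]$, never discards a fitter incumbent. Consequently, after \textsc{Replace}$_{SA}$ acts on the child $x'$ and the parent $x$ (Algorithm \ref{alg:simulatedvr}, line 2), the surviving individual is always the fitter of the two, i.e. $\textsc{Replace}_{SA}(x',x) = \textsc{Best}(x',x)$ in the sense of equation \ref{eq:Best}. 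A first useful consequence, which I would state explicitly, is that this map carries no dependence on the temperature $T$: although \textsc{sa} as a whole is non-stationary, the elitist replacement is represented by a single stationary kernel applied at every step.

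Second, I would identify the two building blocks of $\pi_1 \circ s_2$ and check that their composite reproduces $\textsc{Best}(x',x)$. Let $s_2$ be the (deterministic) selection that reorders the pair $(x',x)$ into fitness order, placing in the first coordinate the fitter individual under the tie-breaking convention of equation \ref{eq:Best}, and let $\pi_1$ be the projection onto that first coordinate. By construction $\pi_1\bigl(s_2(x',x)\bigr) = \textsc{Best}(x',x)$, so the composite map agrees pointwise with the elitist replacement obtained in the first step; note also that, with the multiplication convention of equation \ref{eq:multiplyKernels}, writing $\pi_1 \circ s_2$ applies $s_2$ first and then $\pi_1$, which is the order we want.

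Third, I would verify that each factor is a Markov kernel and invoke kernel multiplication. The projection $\pi_1$ is a deterministic map, hence its associated kernel is the Dirac kernel $K_{\pi_1}(y,A)=\delta_{\pi_1(y)}(A)$, which is a probability measure in $A$ and measurable in $y$; the reordering $s_2$ is a (deterministic) selection scheme and therefore admits a Markov kernel by the corollaries to Proposition \ref{prop:S1}. Composing them with the kernel-multiplication operator of equation \ref{eq:multiplyKernels} yields the Markov kernel $K_{R_{SA}} = \pi_1 \circ s_2$, which is explicitly the Dirac kernel $R_{SA}\bigl((x',x),A\bigr)=\delta_{\textsc{Best}(x',x)}(A)$. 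This establishes both that $R_{SA}$ is a kernel and that it characterizes the elitist \textsc{Replace}$_{SA}$.

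The hard part will be the careful treatment of the tie case $f(x)=f(x')$ together with the claim of temperature-independence. On ties, $s_2$ must break the ordering exactly as $\textsc{Best}$ does in equation \ref{eq:Best} (favouring the earlier-indexed argument, here the child $x'$), so that the composite Dirac kernel coincides with the elitist replacement on all of $\Omega^2$ and not merely off the tie set; and one must justify that under elitism the Metropolis acceptance factor genuinely drops out of the best-so-far update, which is precisely what permits a single stationary kernel to stand in for the replacement at every temperature.
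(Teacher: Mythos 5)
Your proof is correct and follows essentially the same route as the paper, which simply defers to Lemma 75 of \cite{GOMEZ201953} for the hill-climbing replacement $R_{HC}$: there, as here, the elitist replacement is identified with the deterministic map $\textsc{Best}$ on the parent--child pair and realized as the (Dirac) kernel of the sort-then-project composition $\pi_{1}\circ s_{2}$. Your explicit handling of ties and of temperature-independence only fills in details that the paper leaves to the citation.
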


        \begin{proof}
            It is defined in the same way that the method  of \Call{R$_{HC}$}{} in \cite{GOMEZ201953}. So the proof uses the same argument that lemma 75 of \cite{GOMEZ201953}.
        \end{proof}
        
        \begin{prop}
            \label{prop:sa-2}
            if the stochastic method \Call{Variate$_{AS_{T}}$}{} can be characterized by a non-stationary Markov kernel $V_{SA_{T}}^{(t)} \colon \Omega \times \Sigma \longrightarrow [1,0]$ and condition of proposition \ref{prop:sa-1} are fulfilled then method the \Call{NextPop$_SA$}{x} can be described as a \textbf{VR} non-stationary Markov Kernel defined as

            \begin{equation}K_{SA}^{(t)} = K_{R} \circ K_{ V_{SA_{T}}}^{(t)}\end{equation}
            
        \end{prop}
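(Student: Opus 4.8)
The plan is to recognize \textsc{NextPop}$_{SA}$ as the sequential composition of the variation step followed by the replacement step, and then to read off the claimed product kernel directly from the kernel multiplication operator of equation \ref{eq:multiplyKernels}. Inspecting algorithm \ref{alg:simulatedvr}, the call \textsc{NextPop}$_{SA}(x)$ first applies \textsc{Variate}$_{SA_T}$ to the current state, then hands the resulting child--parent pair to \textsc{Replace}$_{SA_T}$, while \textsc{UpdateParameters} merely resets $T$ for the next iteration and contributes no transition of its own. Hence, as a stochastic map, \textsc{NextPop}$_{SA}$ is the composition $\textsc{Replace}_{SA}\circ\textsc{Variate}_{SA_T}$, and the whole task reduces to expressing the kernel of this composition.

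First I would instantiate the two kernels granted by the hypotheses: by assumption \textsc{Variate}$_{SA_T}$ carries the non-stationary kernel $K_{V_{SA_T}}^{(t)}$, and by Proposition \ref{prop:sa-1} the elitist \textsc{Replace}$_{SA}$ carries the kernel $K_R$. Both satisfy the two defining conditions of a Markov kernel, namely being a probability measure in the set argument and a measurable function in the point argument. Since kernel multiplication preserves these two properties and is associative \cite{FristedtGray97}, the product $K_R\circ K_{V_{SA_T}}^{(t)}$ is again a bona fide Markov kernel; applying equation \ref{eq:multiplyKernels} to the composition of the two steps yields exactly $K_{SA}^{(t)}=K_R\circ K_{V_{SA_T}}^{(t)}$. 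Non-stationarity is then inherited directly from the variation factor: because $K_{V_{SA_T}}^{(t)}$ depends on $t$ through the temperature schedule, so does the product, whereas $K_R$ contributes the same elitist comparison at every step.

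The main obstacle is the bookkeeping of the state space across the composition, since \textsc{Variate}$_{SA_T}$ is presented on $\Omega$ whereas \textsc{Replace}$_{SA}$ acts on $\Omega^{2}$. To make the product well-typed I would follow the convention used for the hill-climbing kernel $\textsc{R}_{HC}$ in \cite{GOMEZ201953}: the variation step is understood to retain the parent and emit the pair $(x,x')\in\Omega^{2}$, so that its effective kernel lands in the domain of $K_R$, while the projection $\pi_{1}$ appearing in $K_{R_{SA}}=\pi_{1}\circ s_{2}$ collapses the selected pair back to a single individual in $\Omega$. Once the domains are aligned in this way, the composition is dimensionally consistent, and the only remaining verification is the routine check that the integral in equation \ref{eq:multiplyKernels} is well defined for these factors, which reduces to the same argument already carried out for the stationary \textbf{VR} schemes in \cite{GOMEZ201953}.
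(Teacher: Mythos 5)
Your proposal is correct and takes essentially the same route as the paper: the paper's entire proof is the one-line observation that $K_{SA}^{(t)}$ is a kernel composition under the given conditions, which is exactly the argument you spell out (including the domain bookkeeping between $\varOmega$ and $\varOmega^{2}$ that the paper leaves implicit).
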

        \begin{proof}
            $K_{SA}^{(t)}$ is a kernel composition under the given conditions.
        \end{proof}

        \begin{prop}
            \label{prop:sa-3}
            if \Call{Replace$_{SA}$}{} is an elitist method, then \Call{NextPop$_{SA}$}{} can be characterized by an elitist non-stationary Markov kernel.
            \end{prop}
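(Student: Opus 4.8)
The plan is to show that the composite kernel $K_{SA}^{(t)}=K_{R}\circ K_{V_{SA_T}}^{(t)}$ supplied by Proposition~\ref{prop:sa-2} inherits the elitism of its replacement factor. Recall that a kernel $K$ is \emph{elitist} when it never worsens the best fitness, i.e.\ $f\left(\textsc{Best}\left(y\right)\right)\trianglelefteq f\left(\textsc{Best}\left(x\right)\right)$ for every $y$ reachable from $x$; in the $\epsilon$-state language this amounts to $K\left(x,\varOmega_{\epsilon}\right)=1$ for every $x\in\varOmega_{\epsilon}$ and every $\epsilon>0$, which is exactly the second hypothesis feeding into Lemma~\ref{lem:(Gomez-1)}. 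Hence the goal reduces to proving that $K_{SA}^{(t)}\left(x,\varOmega_{\epsilon}\right)=1$ whenever $x\in\varOmega_{\epsilon}$, for all $t\geq1$.

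First I would make precise that, as in the hill-climbing formalization of \cite{GOMEZ201953}, the variation stage does not discard the parent: \Call{Variate$_{SA}$}{} is read on the augmented space so that the intermediate state presented to \Call{Replace$_{SA}$}{} is the pair $\left(x',x\right)$ consisting of the freshly generated child and the incumbent parent. With this convention the composition of Proposition~\ref{prop:sa-2} is well typed, $K_{V_{SA_T}}^{(t)}$ producing a distribution over such pairs and $K_{R}$ (the kernel $\pi_{1}\circ s_{2}$ of Proposition~\ref{prop:sa-1}) selecting from each pair. The elitism hypothesis on \Call{Replace$_{SA}$}{} then guarantees, deterministically, $f\left(\textsc{Best}\left(R_{SA}\left(x',x\right)\right)\right)\trianglelefteq f\left(\textsc{Best}\left(x\right)\right)$ for \emph{every} child $x'$, since the replacement retains the best of parent and child (and, when best-tracking is enabled, of the incumbent best as well).

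Next I would push this pointwise estimate through the composition integral, mirroring the argument of Lemma~75 in \cite{GOMEZ201953}. Fix $x\in\varOmega_{\epsilon}$, so $d\left(x\right)<\epsilon$. Every intermediate state $y$ in the support of $K_{V_{SA_T}}^{(t)}\left(x,\cdot\right)$ has $x$ as its parent component, so the elitist replacement yields $d\left(R_{SA}\left(y\right)\right)\leq d\left(x\right)<\epsilon$, whence $K_{R}\left(y,\varOmega_{\epsilon}\right)=1$ for every such $y$. Substituting into
\begin{equation*}
    K_{SA}^{(t)}\left(x,\varOmega_{\epsilon}\right)=\int K_{R}\left(y,\varOmega_{\epsilon}\right)K_{V_{SA_T}}^{(t)}\left(x,dy\right)=\int 1\cdot K_{V_{SA_T}}^{(t)}\left(x,dy\right)=1
\end{equation*}
gives the claim; the identical computation with $\varOmega_{\overline{\epsilon}}$ in place of $\varOmega_{\epsilon}$ shows that closed $\epsilon$-states are preserved as well, so $K_{SA}^{(t)}$ is elitist for each $t\geq1$.

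I expect the only genuine obstacle to be the bookkeeping in the second step: one must justify that the parent is actually carried through the variation so that $K_{R}$ receives the pair $\left(x',x\right)$ rather than $x'$ alone. Once this typing convention is fixed (as it is for \Call{R$_{HC}$}{} in \cite{GOMEZ201953}), the integral collapses because the integrand $K_{R}\left(y,\varOmega_{\epsilon}\right)$ is identically $1$ over the support of the variation kernel, independently of the temperature schedule driving $K_{V_{SA_T}}^{(t)}$. In particular, non-stationarity plays no role in the elitism argument: elitism is a per-step property holding for each factor, and the composition preserves it.
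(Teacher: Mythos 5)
Your argument is correct and follows essentially the same route as the paper, which simply defers to Proposition 77 of \cite{GOMEZ201953}: elitist replacement retains the best of parent and child, so the composed VR kernel satisfies $K_{SA}^{(t)}\left(x,\varOmega_{\epsilon}\right)=1$ for $x\in\varOmega_{\epsilon}$, independently of the temperature schedule. Your explicit treatment of the typing convention (the variation kernel carrying the parent through to the replacement stage) and the observation that non-stationarity is irrelevant to elitism are both sound and in fact more detailed than what the paper records.
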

        \begin{proof}
            This proof uses the same argument as proposition 77 in \cite{GOMEZ201953}.
        \end{proof}

    \subsection{Convergence}
        \begin{cor}
            \label{cor:sa1}
            if conditions of propositions \ref{prop:sa-1}, \ref{prop:sa-2} and \ref{prop:sa-3}, are fulfilled and method \Call{Variate$_{AS_{T}}$}{} is optimal strictly bounded from zero then \Call{NextPop$_{SA}$}{} is optimal strictly bounded from zero.
        \end{cor}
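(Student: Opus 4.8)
The plan is to transfer the ``optimal strictly bounded from zero'' property from the variation kernel to the composed kernel $K_{SA}^{(t)}$, exploiting the elitism of the replacement. Recall that a non-stationary method is \emph{optimal strictly bounded from zero} when its kernel places, at every step $t\ge1$, a probability bounded below by a fixed $\delta>0$ on reaching $\varOmega_{\epsilon}$ from any non-optimal state; this is exactly the hypothesis demanded by Lemma \ref{lem:(Gomez-1)}. For \Call{Variate$_{SA_{T}}$}{} the assumption therefore reads: there is a $\delta>0$, independent of $t$, with $K_{V_{SA_{T}}}^{(t)}(x,G_{\epsilon})\ge\delta$ for every $x\in\varOmega_{\epsilon}^{c}$ and every $t\ge1$, where $G_{\epsilon}$ is the set of intermediate states whose generated child lies in $\varOmega_{\epsilon}$.

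By Proposition \ref{prop:sa-2} we may write $K_{SA}^{(t)}=K_{R}\circ K_{V_{SA_{T}}}^{(t)}$, so expanding the composition with equation \ref{eq:multiplyKernels} gives $K_{SA}^{(t)}(x,\varOmega_{\epsilon})=\int K_{R}(y,\varOmega_{\epsilon})\,K_{V_{SA_{T}}}^{(t)}(x,dy)$. The key observation is that, by Proposition \ref{prop:sa-1}, the replacement kernel $K_{R}=\pi_{1}\circ s_{2}$ is elitist: it retains the fitter of parent and child, so the fitness of its output never exceeds that of the child. Hence $d(\text{output})\le d(\text{child})<\epsilon$ whenever the child lies in $\varOmega_{\epsilon}$, which forces $K_{R}(y,\varOmega_{\epsilon})=1$ for every $y\in G_{\epsilon}$.

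With this equality in hand I would restrict the integral to $G_{\epsilon}$ and discard the nonnegative remainder, obtaining, for every $x\in\varOmega_{\epsilon}^{c}$, the chain $K_{SA}^{(t)}(x,\varOmega_{\epsilon})\ge\int_{G_{\epsilon}}K_{R}(y,\varOmega_{\epsilon})\,K_{V_{SA_{T}}}^{(t)}(x,dy)=\int_{G_{\epsilon}}K_{V_{SA_{T}}}^{(t)}(x,dy)=K_{V_{SA_{T}}}^{(t)}(x,G_{\epsilon})\ge\delta$. For the complementary case $x\in\varOmega_{\epsilon}$, Proposition \ref{prop:sa-3} guarantees that \Call{NextPop$_{SA}$}{} is itself elitist, so an already-optimal parent cannot leave $\varOmega_{\epsilon}$ and $K_{SA}^{(t)}(x,\varOmega_{\epsilon})=1$. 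The same $\delta$ thus witnesses the property for $K_{SA}^{(t)}$ at all $t\ge1$, which is the claimed conclusion and, through Lemma \ref{lem:(Gomez-1)} and Theorem \ref{thm:Gomez-1}, secures the convergence of \textsc{sa}.

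The step I expect to be delicate is not the integral estimate but the type bookkeeping of the first paragraph: one must pin down what the intermediate state $y$ emitted by \Call{Variate$_{SA_{T}}$}{} actually is --- a bare child or the child--parent pair consumed by $R_{SA}$ --- and confirm that the elitist selection $\pi_{1}\circ s_{2}$ maps the good set $G_{\epsilon}$ into $\varOmega_{\epsilon}$ with probability one. Once this matches the notion of elitism fixed in Proposition \ref{prop:sa-1}, the identity $K_{R}(y,\varOmega_{\epsilon})=1$ on $G_{\epsilon}$ is immediate and the remaining inequalities follow from monotonicity of the integral.
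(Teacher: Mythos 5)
Your proposal is correct, and it proves rather than cites the key fact. The paper's own proof is a one\-/line appeal to Definition 67, Lemma 68 and Definition 69 of \cite{GOMEZ201953} together with Proposition \ref{prop:sa-3}: an elitist kernel composed with a variation kernel that is optimal strictly bounded from zero is itself optimal strictly bounded from zero. You have essentially re-derived the content of that cited lemma in the non-stationary setting: splitting $K_{SA}^{(t)}=K_{R}\circ K_{V_{SA_{T}}}^{(t)}$ via equation \ref{eq:multiplyKernels}, restricting the integral to the good set $G_{\epsilon}$, using elitism of $\pi_{1}\circ s_{2}$ to get $K_{R}(y,\varOmega_{\epsilon})=1$ there, and handling $x\in\varOmega_{\epsilon}$ by Proposition \ref{prop:sa-3}. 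What your version buys is self-containedness and an explicit check that the argument survives non-stationarity --- the only genuinely new requirement being that the bound $\delta$ for $K_{V_{SA_{T}}}^{(t)}$ be uniform in $t$, which you correctly build into the hypothesis and which is exactly what Lemma \ref{lem:(Gomez-1)} needs. The type-bookkeeping caveat you flag (whether the intermediate state is the bare child or the child--parent pair fed to $R_{SA}$) is real but is resolved by the form $K_{R_{SA}}=\pi_{1}\circ s_{2}$ in Proposition \ref{prop:sa-1}: sorting the pair and projecting the best coordinate sends any pair with one coordinate in $\varOmega_{\epsilon}$ into $\varOmega_{\epsilon}$ with probability one, so your identity on $G_{\epsilon}$ holds. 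No gap; your route is just the expanded form of the paper's citation.
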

        \begin{proof}
            Follows from definition 67, lemma 68, and definition 69 in \cite{GOMEZ201953} and proposition \ref{prop:sa-3} that establish that \Call{NextPop$_{SA}$}{} can be characterized by an elitist kernel, and this  is optimal strictly bounded from zero.
        \end{proof}

        \begin{thm}
            \textsc{sa} will converge to the global optimum if \Call{Replace$_SA$}{} is elitist and if \Call{Variate$_{AS_{T}}$}{} is optimal strictly bounded from zero.
        \end{thm}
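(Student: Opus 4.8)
The plan is to assemble this final theorem directly from the machinery already built in the preceding corollary and propositions, so the proof is essentially a matter of verifying that the hypotheses of Lemma~\ref{lem:(Gomez-1)} and Theorem~\ref{thm:Gomez-1} are met. First I would invoke Corollary~\ref{cor:sa1}: under the stated assumptions (\Call{Replace$_{SA}$}{} elitist and \Call{Variate$_{AS_{T}}$}{} optimal strictly bounded from zero), the method \Call{NextPop$_{SA}$}{} is characterized by an elitist non-stationary Markov kernel $K_{SA}^{(t)}$ that is optimal strictly bounded from zero. The goal is then to translate ``optimal strictly bounded from zero'' plus ``elitist'' into precisely the two per-step conditions of Lemma~\ref{lem:(Gomez-1)}.

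The key steps, in order, are as follows. (i) Because \Call{Replace$_{SA}$}{} is elitist, once the process enters $\varOmega_{\epsilon}$ it cannot leave it: the elitist kernel satisfies $K_{SA_{t}}\!\left(x,\varOmega_{\epsilon}\right)=1$ for all $x\in\varOmega_{\epsilon}$ and all $t\geq1$. This is the second hypothesis of Lemma~\ref{lem:(Gomez-1)}. (ii) Because \Call{Variate$_{AS_{T}}$}{} is optimal strictly bounded from zero, there is a uniform $\delta>0$ with $V_{SA_{T}}^{(t)}\!\left(x,\varOmega_{\epsilon}\right)\geq\delta$ for every $x\in\varOmega_{\epsilon}^{c}$ and every $t$; since the elitist replacement never discards an improvement into $\varOmega_{\epsilon}$, the composed kernel inherits $K_{SA_{t}}\!\left(x,\varOmega_{\epsilon}\right)\geq\delta>0$ for all $x\in\varOmega_{\epsilon}^{c}$. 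This is the first hypothesis of Lemma~\ref{lem:(Gomez-1)}. (iii) Apply Lemma~\ref{lem:(Gomez-1)} to conclude $K_{SA}^{(t)}\!\left(x,\varOmega_{\epsilon}\right)\geq1-\left(1-\delta\right)^{t}$ for all $t\geq1$. (iv) Feed this bound into Theorem~\ref{thm:Gomez-1}, which then yields convergence of \textsc{sa} to the global optimum $f^{*}$ regardless of the initial distribution; equivalently one checks that $\Pr\{|D_{t}|>\epsilon\}\leq\left(1-\delta\right)^{t}$ is summable, so Proposition~\ref{lem:ZeroConvergence} applies.

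The main obstacle I anticipate is step (ii): making precise that the uniform lower bound $\delta$ survives the composition with the elitist replacement and, crucially, that it is independent of $t$ despite the cooling schedule $T=\alpha(T)$. The phrase ``optimal strictly bounded from zero'' must be read as supplying a single $\delta$ valid for all time steps, so that $\left(1-\delta\right)^{t}\to0$; if instead the per-step bound $\delta_{t}$ degraded as the temperature dropped, the geometric bound of Lemma~\ref{lem:(Gomez-1)} would fail and one would only get the weaker product $\prod_{s=1}^{t}\left(1-\delta_{s}\right)$, which need not vanish. I would therefore be explicit that the hypothesis guarantees a uniform $\delta$, relying on definition~67 and lemma~68 of \cite{GOMEZ201953} as cited in Corollary~\ref{cor:sa1}; with that in hand the remaining steps are immediate applications of the already-established lemma and theorem.

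\begin{proof}
By hypothesis \Call{Replace$_{SA}$}{} is elitist and \Call{Variate$_{AS_{T}}$}{} is optimal strictly bounded from zero, so the conditions of Corollary~\ref{cor:sa1} hold and \Call{NextPop$_{SA}$}{} is characterized by an elitist non-stationary Markov kernel $K_{SA}^{(t)}$ that is optimal strictly bounded from zero. Elitism gives $K_{SA_{t}}\!\left(x,\varOmega_{\epsilon}\right)=1$ for all $x\in\varOmega_{\epsilon}$ and all $t\geq1$, while optimality strictly bounded from zero provides a uniform $\delta>0$ with $K_{SA_{t}}\!\left(x,\varOmega_{\epsilon}\right)\geq\delta$ for all $x\in\varOmega_{\epsilon}^{c}$ and all $t\geq1$. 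These are exactly the two premises of Lemma~\ref{lem:(Gomez-1)}, whence $K_{SA}^{(t)}\!\left(x,\varOmega_{\epsilon}\right)\geq1-\left(1-\delta\right)^{t}$ for every $t\geq1$. Applying Theorem~\ref{thm:Gomez-1} to this bound, and noting it holds for every $\epsilon>0$, we conclude that \textsc{sa} converges to the global optimum $f^{*}$ regardless of the initial distribution $p(\cdot)$.
\end{proof}
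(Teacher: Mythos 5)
Your proposal is correct and follows essentially the same route as the paper, whose entire proof is the one-line citation "Follows from corollary \ref{cor:sa1}, and propositions \ref{prop:sa-1}, \ref{prop:sa-2} and \ref{prop:sa-3}." You simply make explicit the chain the paper leaves implicit — corollary \ref{cor:sa1} supplies the two hypotheses of Lemma \ref{lem:(Gomez-1)}, which feeds Theorem \ref{thm:Gomez-1} — and your remark that the bound $\delta$ must be uniform in $t$ despite the cooling schedule is a point the paper glosses over entirely.
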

        
        \begin{proof}
            Follows from corollary \ref{cor:sa1}, and propositions \ref{prop:sa-1}, \ref{prop:sa-2} and \ref{prop:sa-3}.
        \end{proof}

\section{Evolutionary Strategies (\textsc{es})}
    \subsection{Concept}
        Evolutionary Strategies ($\mu / \rho \myreplace \lambda$)-\textsc{es} are a type of Evolutionary Algorithms that apply mutation, recombination, and selection operators to a population of individuals \cite{Beyer2002}, see algorithm \ref{alg:beyer_es}. Every individual is an \textsc{es} that has two parts: the candidate solution ($x$) and the set of endogenous strategy parameters ($s$) used to control the mutation operator (\cite{Beyer2002}). An \textsc{es} randomly initializes the population, line 2, and evolves both parts of the individual (lines 5-9) up to certain ending-condition is fulfilled (line 3). The set of endogenous parameters are exposed to evolution (lines 6 and 8) before producing a child candidate solution (line 7 and 9) to introduce variety. The new individual is a composition of a set of selected candidate solutions (line 5). \textsc{es} generates a new population of $\lambda$ new individuals each generation (line 4). Finally, \textsc{es} selects a final population using two possible approaches. The ($\mu$ + $\lambda$)-\textsc{es} approach that selects the best $\mu$ individuals among the $\mu$ parents and $\lambda$ children or   the ($\mu$,$\lambda$)-\textsc{es} that selects the best $\mu$ individuals from the $\lambda$ children (notice that $\lambda\geq\mu$ in this case). In this work, we study both of them.
    
         \begin{algorithm}
            \caption{\label{alg:beyer_es}Evolutionary strategies described by \cite{Beyer2002}}
        
            \begin{flushleft} \Call{ES}{$\mu / \rho \myreplace \lambda$}\end{flushleft}
            
            \begin{algorithmic}[1]
                \State $g=0$
                \State \Call{initialize}{$P_{q}^{(0)}\colon= \{(y_{m}^{(0)},s_{m}^{(0)},F(y_{m}^{(0)}))$,$m=1,...,\mu$\}}
            
            
                \While{\textsc{\textlnot TerminationCondition}()}
                    \For {$l = \textit{1}$ to $\lambda$ }
                        \State $a_l $ = \Call{Marriage}{$P_{q}^{g}, \rho$}
                        \State $s_l$ = \Call{Recombination$_s$}{$a_{l}$}
                        \State $y_l$ = \Call{Recombination$_y$}{$a_{l}$}
                        \State $s'_l$ = \Call{Mutation$_s$}{$s_l$} 
                        \State $y'_l$ = \Call{Mutation$_s$}{$y_l$, $s'_l$}
                        \State $F'_l$ = $F(y'_l)$ 
                    \EndFor
                    \State $P^{g}_{0} = \{ (y'_l, s'_l, F'_l), l = 1,..., \lambda \}$
                    \State \textbf{if} ($\mu, \lambda$) \textbf{then}
                    \State \indent $P_{q}^{g+1} = \Call{Selection}{P^{g}_{0}, \mu}$
                    \State \textbf{else} ($\mu + \lambda$)
                    \State \indent  $P_{q}^{g+1} = \Call{Selection}{P^{g}_{0}, P_{q}^{g}, \mu}$
                    \State g = g+1
                \EndWhile
            \end{algorithmic} 
        \end{algorithm}
    
    \subsection{Formalization}
        To formalize and characterize ($\mu / \rho \myreplace \lambda$)-\textsc{es}, we rewrite algorithm \ref{alg:beyer_es} in terms of individual non-stationary stochastic methods, see algorithm \ref{algorithm1}. This follows the approach in \cite{GOMEZ201953} that express the algorithms in terms of Variation-Replacement methods to study their convergence properties.

        Notice that algorithms \ref{alg:beyer_es} and \ref{algorithm1} are equivalents: lines 4-11 in algorithm \ref{alg:beyer_es} is method  \Call{Variate}{P}  (line 1) in the \Call{NextPop}{} method of algorithm \ref{algorithm1}. Also, lines 12-15 in algorithm \ref{alg:beyer_es}  are line 2 in the \Call{NextPop}{} method of algorithm \ref{algorithm1}. Using this characterization, we proceed to characterize each method of algorithm \ref{algorithm1} through non-stationary Markov kernels.
 
        \begin{algorithm}
            \caption{\label{algorithm1}Evolutionary strategies algorithm - NextPop method described in terms of VR methods}
        
            \begin{flushleft} \Call{NextSubPop$_i$}{P}\end{flushleft}
            \begin{algorithmic}[1]
                \State $a=$ \Call{PickParents}{$P$}
                \State $q=$ \Call{Xover$_a$}{$P$}
                \State \Call{UpdateStrategies$_a$}{s, $i$}
                \State $q'=$ \Call{Variate$_{s}$}{$q$}
                \State \Return $q'$
            \end{algorithmic} 
                 \begin{flushleft} \Call{UpdateStrategies$_a$}{$s$, $i$}\end{flushleft}
            \begin{algorithmic}[1]
                \State  $s'$  = \Call{XoverStrategie$_a$}{$s$} 
                \State  $s_i$  = \Call{VariateStrategie}{$s'$} 
            \end{algorithmic}
    
            \begin{flushleft} \Call{Variate}{P}\end{flushleft}
            \begin{algorithmic}[1]
                \For {$i = \textit{1}$ to $\lambda$ }
                    \State$Q_{i}$  = \Call{NextSubPop$_{i}$}{$P$}    
                \EndFor
                \State \Return {Q}
            \end{algorithmic}
    
            \begin{flushleft} \Call{NextPop{$_\Psi$}}{P}\end{flushleft}
            \begin{algorithmic}[1]
                \State  $Q'$ = \Call{Variate}{P}
                \State  $Q$ = \Call{Replace{$_\Psi$}}{P, $Q'$}
                \State \Return $Q$
            \end{algorithmic}
        \end{algorithm}

        With the object of characterizing ($\mu / \rho \myreplace \lambda$)-ES  we need to establish some non-stationary  Markov kernels. First, we study the \Call{Variate}{} method (line 1, method \Call{NextPop}{}, algorithm \ref{algorithm1}).
    
        Following definition 53 in \cite{GOMEZ201953}, we can express the variation method $\Call{Variate}{} \colon \Omega ^{\mu}   \longrightarrow \Omega ^{\lambda}$ as a joined stochastic method.
    
        \begin{equation}
            \begin{array}{rcl}
                \Call{Variate}{}(P) = \prod_{i=1}^{\lambda}\Call{NextSubPop}{}_{i}(P)
            \end{array}
        \end{equation}
    
        Where $\Call{NextSubPop}{} \colon \Omega ^{\mu} \longrightarrow \Omega $ chooses $\rho$ individuals from the population, combines the $\rho$ individuals, generates a child and finally mutates the strategy and the child.
    
        \begin{prop}
            \label{prop:es-1}
            If lines 8 and 9 of method \Call{UpdateStrategies}{} of algorithm \ref{algorithm1} can be characterized by non-stationary kernels $X \colon \mathbb{R}^\rho \times \mathscr{B}(\mathbb{R})^{\otimes \rho}\longrightarrow [0,1]$ and $VS^{(t)} \colon \mathbb{R} \times \mathscr{B}(\mathbb{R})\longrightarrow [0,1]$ respectively. \Call{UpdateStrategies}{} can be characterized by a non-stationary kernel $US^{(t)} \colon \mathbb{R}^\rho \times \mathscr{B}(\mathbb{R})\longrightarrow [0,1]$ defined as:
            $$K_{US}^{(t)}= K_{VS}^{(t)} \circ K_{XS}$$
        \end{prop}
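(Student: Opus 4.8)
The plan is to read \textsc{UpdateStrategies} as the sequential composition of the two stochastic methods on its two lines, \textsc{XoverStrategie}$_a$ followed by \textsc{VariateStrategie}, and then to identify its kernel with the kernel product of equation \ref{eq:multiplyKernels}. First I would fix the measurable spaces so the composition is type-correct: \textsc{XoverStrategie}$_a$ recombines the $\rho$ parental strategies $s\in\mathbb{R}^{\rho}$ into a single strategy $s'\in\mathbb{R}$ and is characterized by $K_{XS}$, while \textsc{VariateStrategie} mutates $s'\in\mathbb{R}$ into $s_i\in\mathbb{R}$ and is characterized at time $t$ by $K_{VS}^{(t)}$. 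The intermediate space is thus $(\mathbb{R},\mathscr{B}(\mathbb{R}))$, which is exactly the target of $K_{XS}$ and the source of $K_{VS}^{(t)}$ that the product $K_{VS}^{(t)}\circ K_{XS}$ requires.

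Next I would compute, for a fixed input $s\in\mathbb{R}^{\rho}$, a fixed time $t$, and a Borel set $A\in\mathscr{B}(\mathbb{R})$, the probability that the output $s_i$ of \textsc{UpdateStrategies} lies in $A$. Since the method applies \textsc{VariateStrategie} to the random output of \textsc{XoverStrategie}$_a$ and the two steps draw their randomness independently (the Markov property underlying the VR decomposition), conditioning on the intermediate value $y=s'$ yields $\Pr\{s_i\in A\}=\int_{\mathbb{R}}K_{VS}^{(t)}(y,A)\,K_{XS}(s,dy)$. By equation \ref{eq:multiplyKernels} the right-hand side is precisely $\bigl(K_{VS}^{(t)}\circ K_{XS}\bigr)(s,A)$, so $K_{US}^{(t)}=K_{VS}^{(t)}\circ K_{XS}$, as claimed.

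Finally I would verify that this object is a genuine non-stationary Markov kernel. For each fixed $t$, both $K_{VS}^{(t)}$ and $K_{XS}$ are kernels by hypothesis, and the product of two kernels is again a kernel by the closure/associativity property cited from \cite{FristedtGray97}; hence $s\mapsto K_{US}^{(t)}(s,A)$ is measurable for each fixed $A$ and $A\mapsto K_{US}^{(t)}(s,A)$ is a probability measure for each fixed $s$. Because only the factor $K_{VS}^{(t)}$ carries the time index, the family $\bigl(K_{US}^{(t)}\bigr)_{t\ge1}$ inherits the non-stationary structure. The only real care-point, and the place where I expect the argument to require attention, is the intermediate space: for $K_{VS}^{(t)}\circ K_{XS}$ to be defined, the recombination kernel must output into $(\mathbb{R},\mathscr{B}(\mathbb{R}))$ rather than $(\mathbb{R}^{\rho},\mathscr{B}(\mathbb{R})^{\otimes\rho})$, matching the source of the mutation kernel; once the codomain of $X$ is read this way, the statement is an immediate instance of the composition formula and nothing deeper is needed.
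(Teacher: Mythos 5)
Your proof is correct and takes the same route as the paper, which simply invokes kernel composition (Definition 25 of \cite{GOMEZ201953}): you identify \textsc{UpdateStrategies} as the sequential application of $K_{XS}$ then $K_{VS}^{(t)}$ and recognize its kernel as the product of equation \ref{eq:multiplyKernels}. Your added type-check on the intermediate space is a worthwhile observation, since the proposition as stated declares the codomain of $X$ over $\mathscr{B}(\mathbb{R})^{\otimes\rho}$, which must indeed be read as $\mathscr{B}(\mathbb{R})$ for the composition to be well defined.
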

    
        \begin{proof}
            It is in terms of kernel composition, follows from definition 25 of \cite{GOMEZ201953}.
        \end{proof}
    
        \begin{prop}
            \label{prop:es-2}
            If lines 2 and 4 of algorithm \ref{algorithm1} can be characterized by non-stationary Markov kernels $\Call{Xover}{}_{a} \colon ( \Omega ^{\rho}  \times \Sigma )  \longrightarrow [0,1]$ and  $\Call{Variate}{}_s \colon ( \Omega  \times \Sigma )  \longrightarrow [0,1]$ respectively,  then the method $\Call{NextSubPop}{}$ can be characterized by the kernel $\Call{NextSubPop}{} \colon ( \Omega ^{\rho}  \times \Sigma )  \longrightarrow [0,1]$ defined as the non-stationary kernel:
            $$K_{\Call{NextSubPop}{}} = K_{\Call{Variate}{}_{s}}^{(t)} \circ  K_{XOVER} \circ \pi_{\big\{ 1,..., \rho \big\}} \circ K_{\mathcal{P}}  $$ 


    
    
        \end{prop}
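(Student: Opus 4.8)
The plan is to read \Call{NextSubPop}{} in algorithm \ref{algorithm1} as a sequential pipeline of stochastic sub-operations and then invoke the composition machinery of \cite{GOMEZ201953} (definition 25) together with the associativity of kernel multiplication (\cite{FristedtGray97}, equation \ref{eq:multiplyKernels}) to fuse the line kernels into one. Concretely, I would assign to each line a kernel (or a measurable map regarded as a deterministic kernel) over the appropriate spaces, observe that the lines run one after another with each output feeding the next, and conclude that the kernel of the whole method is exactly the right-to-left composition of the line kernels. The non-stationary factors are handled by reading equation \ref{eq:NS-Kernel-Iteration} at each fixed time $t$, where the composition reduces to a composition of ordinary kernels and is therefore again a kernel.

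First I would treat line 1, \Call{PickParents}{}, which selects $\rho$ individuals from the $\mu$-population $P$: by the corollary following proposition \ref{prop:S1}, a selection scheme built from a probability function has a Markov kernel $K_{\mathcal{P}}$, and the coordinate projection $\pi_{\{1,\dots,\rho\}}$ extracting the chosen parents is measurable, hence a deterministic kernel. Second, line 2, \Call{Xover}{}$_a$, is the kernel $K_{XOVER}$ by hypothesis. Third, line 3, \Call{UpdateStrategies}{}$_a$, is the non-stationary kernel $K_{US}^{(t)}$ provided by proposition \ref{prop:es-1}. Fourth, line 4, \Call{Variate}{}$_s$, is the non-stationary kernel $K^{(t)}_{\Call{Variate}{}_{s}}$ by hypothesis. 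With every factor identified as a kernel, composing them in the order of execution (which, written as a product of kernels, reads right-to-left) and applying associativity yields the single displayed kernel; since $K^{(t)}_{\Call{Variate}{}_{s}}$ (and $K_{US}^{(t)}$) are time-indexed, the result is non-stationary.

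The main obstacle is the bookkeeping of the strategy parameters, and in particular reconciling the displayed formula with line 3. An \textsc{es} individual carries both a candidate solution and its endogenous strategy $s$; line 3 overwrites $s$ with the mutated strategy, and line 4 varies the child using precisely that updated strategy. Hence $K_{US}^{(t)}$ must be placed immediately before $K^{(t)}_{\Call{Variate}{}_{s}}$ so that variation acts on the post-update strategy, and I would argue that this is exactly why $K_{US}^{(t)}$ does not appear as a separate factor: it is absorbed into the dependence of $K^{(t)}_{\Call{Variate}{}_{s}}$ on $s$ (the subscript $s$ denotes the already-updated strategy). Making this absorption rigorous — showing that varying over the joint $(q,s)$-state and then projecting onto the solution component agrees with the claimed composition on $\Omega$ — is the step that needs the most care; the remaining fusion of $K_{XOVER}$, $\pi_{\{1,\dots,\rho\}}$, and $K_{\mathcal{P}}$ is routine once each is known to be a kernel.
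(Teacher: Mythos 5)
Your proposal takes essentially the same route as the paper: the paper's entire proof of this proposition is the one-line remark that the result ``is in terms of kernel composition, follows from definition 25 of \cite{GOMEZ201953}'', i.e., precisely the right-to-left fusion of the per-line kernels that you carry out in detail. The one point where you go beyond the paper --- asking how the \textsc{UpdateStrategies} kernel $K_{US}^{(t)}$ from proposition \ref{prop:es-1} is reconciled with the displayed product, which silently omits it --- is a genuine subtlety that the paper's own proof does not address, so nothing is missing from your argument relative to the paper's standard of rigor.
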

        \begin{proof}
            It is in terms of kernel composition, follows from definition 25 of \cite{GOMEZ201953}.
        \end{proof}
    
        \begin{prop}
            \label{prop:es-3}
            If $\Call{NextSubPop}{}$ can be characterized by a non-stationary Markov kernel, the stochastic method $\Call{Variate}{}^{(t)}$ can be characterized by a kernel $V \colon \Omega ^{\mu} \times \Sigma ^{\otimes \mu} \longrightarrow [0,1] $ defined as   
            $$K_{\Call{Variate}{}}^{(t)} = [\varoast_{i=1}^{\lambda}[K_{\Call{NextSubPop}{}_{i}}]]$$
        \end{prop}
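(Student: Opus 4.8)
The plan is to recognize \Call{Variate}{} as the joined stochastic method of its $\lambda$ copies of \Call{NextSubPop}{} and then invoke the joined-kernel construction already used in Proposition~\ref{prop:S1} (and, underneath it, definition~53 of \cite{GOMEZ201953}). Since \Call{Variate}{} runs \Call{NextSubPop}{}$_i$ on the \emph{same} input population $P\in\Omega^{\mu}$ for $i=1,\dots,\lambda$ with independent internal randomness, conditioned on $P$ its $\lambda$ outputs are mutually independent. By hypothesis each \Call{NextSubPop}{}$_i$ carries a non-stationary kernel $K_{\Call{NextSubPop}{}_i}^{(t)}$, so for every fixed $t$ we obtain $\lambda$ genuine Markov kernels $K_i^{(t)}:=K_{\Call{NextSubPop}{}_i}^{(t)}$ on the common domain $\Omega^{\mu}$.

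Fixing $t$, I would define the joined kernel on the generating rectangles by
\[\Big(\varoast_{i=1}^{\lambda}K_i^{(t)}\Big)\big(P,\textstyle\prod_{i=1}^{\lambda}A_i\big)=\prod_{i=1}^{\lambda}K_i^{(t)}(P,A_i),\]
the product-measure prescription dictated by the conditional independence of the draws, and extend it to the full product $\sigma$-algebra $\Sigma^{\otimes\lambda}$ on the output space $\Omega^{\lambda}$ by the standard finite-product (Carath\'eodory) extension. I would then check the two kernel axioms for the extended object: for each fixed $P$ the set function $A\mapsto\big(\varoast_{i}K_i^{(t)}\big)(P,A)$ is a product of $\lambda$ probability measures, hence itself a probability measure; for each fixed $A$ the map $P\mapsto\big(\varoast_{i}K_i^{(t)}\big)(P,A)$ is measurable because on rectangles it is a finite product, and in general a finite integral, of the coordinatewise measurable maps $P\mapsto K_i^{(t)}(P,A_i)$. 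Because the time index $t$ passes transparently through every component, the family $\{\varoast_{i=1}^{\lambda}K_i^{(t)}\}_{t\ge1}$ is a non-stationary Markov kernel, which is precisely the asserted $K_{\Call{Variate}{}}^{(t)}$.

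The only non-routine point is the measure-theoretic one: verifying that the rectangle-wise product premeasure is $\sigma$-additive and that its extension to $\Sigma^{\otimes\lambda}$ still depends measurably on $P$. This is exactly what the joined-kernel operator $\varoast$ packages (cf.\ definition~53 of \cite{GOMEZ201953} and the corollary to Proposition~\ref{prop:S1}); once it is available the axiom check above is immediate. Conceptually, the load-bearing fact is the conditional independence of the $\lambda$ calls to \Call{NextSubPop}{} given $P$, which is what licenses the product measure, rather than some correlated coupling, as the joint law of $\Call{Variate}{}(P)$.
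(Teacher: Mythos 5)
Your proposal is correct and follows essentially the same route as the paper: both reduce the claim to the joined stochastic method construction (definition 55 and proposition 56 of \cite{GOMEZ201953}), under which the $\lambda$ conditionally independent calls to \textsc{NextSubPop} on the common input $P$ yield the product-form kernel $\varoast_{i=1}^{\lambda}K_{\textsc{NextSubPop}_i}$. You merely unpack the rectangle-wise product-measure verification and the measurability check that the cited definition already packages, together with the observation that the time index passes through unchanged.
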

    
        \begin{proof}
            It is a join stochastic method, follows from definition 55 and proposition 56 of \cite{GOMEZ201953}.
        \end{proof}
    
        \begin{prop}
            \label{prop:es-4}
            The stochastic method \Call{Replace}$_{(\mu+\lambda)}$ used in line 2 of method \Call{NextPop}{}, can be characterized by the kernel $R_{\mu,\mu + \lambda} \colon \Omega ^{\mu+\lambda}   \times \Sigma^{\otimes \mu}  \longrightarrow [0,1] $ defined as
            $K_{R_{\mu,\mu + \lambda}} = \pi_{\big\{ 1,...,\mu \big\} } \circ s_{\mu+ \lambda,\mu + \lambda -1}$ and the stochastic method \linebreak $\Call{Replace}{}_{(\mu,\lambda)}$, can be characterized by the kernel $R_{\mu, \lambda} \colon \Omega ^{\lambda}   \times \Sigma^{\otimes \mu}  \longrightarrow [0,1$ defined as
            $K_{R_{\mu,\lambda}} = \pi_{\big\{ 1,...,\mu \big\} } \circ s_{\lambda,\lambda -1}$ 
        \end{prop}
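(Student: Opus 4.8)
The plan is to treat both replacement operators as deterministic post-processing of the augmented population and to reuse the kernel-composition machinery already fixed in \cite{GOMEZ201953} and exercised in Proposition~\ref{prop:sa-1}, where the elitist single-individual replacement was written as $K_{R_{SA}}=\pi_{1}\circ s_{2}$. Concretely, I would first describe each replacement policy as the composition of a fitness-sorting step followed by a truncation step: given the input list, the operator sorts its entries in non-decreasing order of $f$ (breaking ties by the deterministic convention of \textsc{Best}, equation~\ref{eq:Best}), and then keeps only the first $\mu$ coordinates. For the plus-selection the input list is the concatenation of the $\mu$ parents and the $\lambda$ children, so the sort acts on $\mu+\lambda$ entries ($s_{\mu+\lambda,\mu+\lambda-1}$) and the truncation is $\pi_{\{1,\dots,\mu\}}$; for the comma-selection only the $\lambda$ children enter, so the sort acts on $\lambda$ entries ($s_{\lambda,\lambda-1}$, well defined because $\lambda\geq\mu$) followed by the same truncation. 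This is exactly the semantics of the \textsc{Selection} calls in lines~12--15 of Algorithm~\ref{alg:beyer_es}.

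Second, I would argue that each of the two building blocks is itself a (degenerate) Markov kernel. The sorting map is a piecewise-constant permutation of coordinates: the domain $\Omega^{n}$ is partitioned into finitely many measurable cells according to the ordering of the fitness values and the tie-breaking rule, and on each cell the map is a fixed permutation; hence it is a measurable deterministic map and induces the kernel $K(x,A)=\mathbf{1}[\,\mathrm{sort}(x)\in A\,]$, which is precisely the operator $s_{n,n-1}$ already used by Gomez and in Proposition~\ref{prop:sa-1}. The projection $\pi_{\{1,\dots,\mu\}}$ is a coordinate projection, trivially measurable, and likewise induces a deterministic kernel.

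Third, I would close the argument by invoking the well-definedness and associativity of kernel composition (the kernel-multiplication operator, equation~\ref{eq:multiplyKernels}, equivalently Definition~25 of \cite{GOMEZ201953}): the composite $\pi_{\{1,\dots,\mu\}}\circ s_{n,n-1}$ of two kernels is again a kernel, which yields $K_{R_{\mu,\mu+\lambda}}$ and $K_{R_{\mu,\lambda}}$ with the stated signatures $\Omega^{\mu+\lambda}\times\Sigma^{\otimes\mu}\to[0,1]$ and $\Omega^{\lambda}\times\Sigma^{\otimes\mu}\to[0,1]$. The two cases are identical apart from the arity of the sort, so a single argument covers both.

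The step I expect to be the only real obstacle is the measurability of the sort operator, i.e.\ checking that the fitness-based ordering partitions $\Omega^{n}$ into measurable cells on which the permutation is constant; once the tie-breaking of \textsc{Best} is used to make the ordering single-valued, this reduces to the measurability of $f$ and of the comparison relation $\vartriangleleft$, and the remainder is the routine composition argument shared with Propositions~\ref{prop:es-1}--\ref{prop:es-3}.
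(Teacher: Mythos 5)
Your proposal is correct and follows essentially the same route as the paper: the replacement operators are written as the composition $\pi_{\{1,\dots,\mu\}}\circ s_{n,n-1}$ of a sorting kernel and a projection kernel, and the conclusion is drawn from the kernel-composition machinery (Definition 25 of \cite{GOMEZ201953}). The paper's own proof is a one-line appeal to that composition result; you additionally spell out why each factor is a (deterministic) kernel, in particular the measurability of the fitness-based sort, which the paper leaves implicit but which is indeed the only point requiring care.
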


        \begin{proof}
            $K_{R_{\mu,\lambda}}$ and $K_{R_{\mu+\lambda}}$ are kernels composition. Follows from definition 25 in \cite{GOMEZ201953}.
        \end{proof}
        \begin{cor}
            If methods \Call{PickParents}{}, \Call{XOver$_a$}{}, \Call{XoverStrategie$_a$}{}, \Call{VariateStrategie}{} and, \Call{Variate$_{s}$}{} can be described by Markov kernels  fulfilling the  conditions of \ref{prop:es-1} and \ref{prop:es-2},  evolutionary Strategies can be described by a \textbf{VR} kernel.
    	    $$K_{ES} = K_{R} \circ K_{V}$$
    	
    	    where:
    	
            $$K_{V}=K_{\Call{Variate}{}}$$
    	
    	
        	\begin{equation}
                K_{R} = K_{R_{\mu,\lambda}}
                \text{ or }
                K_{R} = K_{R_{\mu+\lambda}}
            \end{equation}
    
        \end{cor}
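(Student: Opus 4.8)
The plan is to build $K_{ES}$ from the ground up out of the component kernels supplied by the hypotheses, and then to close with a single kernel composition. First I would invoke Proposition~\ref{prop:es-1}: since \Call{XoverStrategie$_a$}{} and \Call{VariateStrategie}{} carry kernels $X$ and $VS^{(t)}$, the method \Call{UpdateStrategies}{} is characterized by the non-stationary kernel $K_{US}^{(t)}=K_{VS}^{(t)}\circ K_{XS}$. Next, feeding this kernel together with those for \Call{PickParents}{}, \Call{Xover$_a$}{} and \Call{Variate$_s$}{} into Proposition~\ref{prop:es-2}, I obtain the non-stationary kernel $K_{\Call{NextSubPop}{}}$ for a single sub-population step. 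Proposition~\ref{prop:es-3} then lifts this to the whole variation operator via the join over the $\lambda$ offspring, yielding $K_V=K_{\Call{Variate}{}}^{(t)}=\varoast_{i=1}^{\lambda}K_{\Call{NextSubPop}{}_i}$, a kernel $\Omega^{\mu}\times\Sigma^{\otimes\mu}\to[0,1]$ by proposition~56 of \cite{GOMEZ201953}.

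In parallel, Proposition~\ref{prop:es-4} supplies the replacement kernel $K_R$: for the $(\mu,\lambda)$ scheme it is $K_{R_{\mu,\lambda}}=\pi_{\{1,\ldots,\mu\}}\circ s_{\lambda,\lambda-1}$, and for the $(\mu+\lambda)$ scheme it is $K_{R_{\mu,\mu+\lambda}}=\pi_{\{1,\ldots,\mu\}}\circ s_{\mu+\lambda,\mu+\lambda-1}$; in either case $K_R$ is a (deterministic) kernel into $\Sigma^{\otimes\mu}$. Reading off the \Call{NextPop$_\Psi$}{} routine of algorithm~\ref{algorithm1}, the next population is produced by applying \Call{Variate}{} and then \Call{Replace$_\Psi$}{}, so the associated kernel is exactly the composition $K_{ES}=K_R\circ K_V$. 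That this is again a kernel follows from the kernel-multiplication operator and its associativity (definition~25 of \cite{GOMEZ201953}), so all that remains is to check that the domains line up under the composition.

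I expect the only genuine obstacle to be the $(\mu+\lambda)$ case. There \Call{Replace$_\Psi$}{} consumes both the parent population $P$ and the offspring $Q'$, so its domain is $\Omega^{\mu+\lambda}$, whereas the codomain of $K_V$ is only $\Omega^{\lambda}$. To write $K_{ES}=K_R\circ K_V$ literally, I would first repackage the variation step as a kernel whose output carries the parents forward alongside the $\lambda$ children, for instance by joining $K_V$ with the identity (projection) kernel acting on $P$, so that the intermediate measurable space is $\Omega^{\mu+\lambda}$ and matches the domain of $K_R$. Verifying measurability of this augmented variation kernel and confirming that the population sizes agree across the composition is the delicate bookkeeping; once the domains match, the statement reduces to the composition rule already used in Propositions~\ref{prop:es-1}--\ref{prop:es-4}.
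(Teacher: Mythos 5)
Your proof follows essentially the same route as the paper, which simply cites Propositions \ref{prop:es-1}--\ref{prop:es-4} and composes the resulting variation and replacement kernels. Your additional observation about the domain mismatch in the $(\mu+\lambda)$ case --- that $K_V$ outputs $\Omega^{\lambda}$ while $K_{R_{\mu,\mu+\lambda}}$ expects $\Omega^{\mu+\lambda}$, so the parents must be carried forward by joining with a projection kernel --- is a legitimate bookkeeping point that the paper's one-line proof silently glosses over.
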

        \begin{proof}
            Follows from propositions \ref{prop:es-1}, \ref{prop:es-2}, \ref{prop:es-3} and \ref{prop:es-4}.
        \end{proof}

    \subsection{Convergence}
    
        \begin{prop}
            \label{prop:elitist-es}
            The \Call{NextPop{$_{(\mu / \rho + \lambda)-ES}$}}{} is an elitist stochastic method that can be characterized by an elitist stochastic kernel
        \end{prop}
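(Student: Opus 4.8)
The plan is to reduce the statement to the single fact that the $(\mu+\lambda)$ replacement retains the incumbent best individual, and then to lift this to the whole \textsc{NextPop} kernel by the same composition argument already used for simulated annealing in Proposition \ref{prop:sa-3}. Recall that, in the Gomez framework, a replacement method is elitist when the fittest element of its output is never worse (with respect to $\trianglelefteq$) than the fittest element of the population it receives, and a \textsc{NextPop} method is elitist when $d(\textsc{NextPop}(P)) \le d(P)$ holds surely; by Definitions 67--69 and Proposition 77 of \cite{GOMEZ201953} such a method is then characterized by an elitist kernel. Because the proposition concerns the \emph{plus} variant, the decisive structural feature I would exploit is that the selection pool in $\Omega^{\mu+\lambda}$ fed to \textsc{Replace}$_{(\mu+\lambda)}$ contains the $\mu$ parents alongside the $\lambda$ children.

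First I would establish that \textsc{Replace}$_{(\mu+\lambda)}$ is elitist. By Proposition \ref{prop:es-4} its kernel is $K_{R_{\mu,\mu+\lambda}} = \pi_{\{1,\ldots,\mu\}}\circ s_{\mu+\lambda,\mu+\lambda-1}$, i.e. the combined $(\mu+\lambda)$-tuple is fully sorted by fitness and the best $\mu$ coordinates are projected out. Since the parent block $P$ is a sub-tuple of this pool, $\textsc{Best}$ of the pool is at least as fit as $\textsc{Best}(P)$, and the overall best coordinate necessarily survives the projection onto the top $\mu$; hence $f(\textsc{Best}(\textsc{Replace}_{(\mu+\lambda)}(P,Q'))) \trianglelefteq f(\textsc{Best}(P))$ surely, which is exactly elitism of the replacement. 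I would note here that, although each individual is an augmented triple $(y,s,F(y))$, both the ordering $s_{\mu+\lambda,\mu+\lambda-1}$ and $\textsc{Best}$ depend only on the fitness component $F=f(y)$, so the strategy parameters $s$ do not interfere with the comparison.

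Second, I would lift elitism from \textsc{Replace} to \textsc{NextPop}. By the corollary preceding this subsection the $(\mu+\lambda)$ kernel factors as $K_{ES} = K_{R_{\mu,\mu+\lambda}} \circ K_V$ with $K_V = K_{\textsc{Variate}}^{(t)}$. The \textsc{Variate} stage only manufactures children and never removes a parent from the pool, so the elitist \textsc{Replace} applied afterwards guarantees $d(\textsc{NextPop}(P)) = d(\textsc{Replace}_{(\mu+\lambda)}(P,Q')) \le d(P)$ surely. This is the same reduction as in Proposition \ref{prop:sa-3} (mirroring Proposition 77 of \cite{GOMEZ201953}), whence the composed non-stationary kernel $K_{ES}$ is elitist, as claimed.

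The main obstacle is pinning down elitism of the replacement rather than the lifting, which is routine once the first step is in place. Everything hinges on the parents being present in the selection pool: for the comma scheme, whose kernel $K_{R_{\mu,\lambda}} = \pi_{\{1,\ldots,\mu\}}\circ s_{\lambda,\lambda-1}$ sorts only the $\lambda$ children, a superior parent can be discarded and $d$ may increase, so the argument deliberately fails there — consistent with the proposition being asserted only for the plus variant. A secondary point to verify is that the augmented ES representation does not corrupt the monotonicity of $\textsc{Best}$, but since all comparisons use the fitness field $F$ this is immediate.
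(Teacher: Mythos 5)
Your proposal is correct and follows essentially the same route as the paper: the decisive fact in both is that $P \subseteq P \cup \textsc{Variate}(P)$, so the incumbent best survives the sort-and-truncate replacement $\pi_{\{1,\ldots,\mu\}}\circ s_{\mu+\lambda,\mu+\lambda-1}$ of the plus scheme. The paper's proof is terser (it simply asserts that \textsc{Replace} is elitist), whereas you additionally derive that elitism from the kernel of Proposition \ref{prop:es-4} and make the lift to \textsc{NextPop} explicit, but the underlying argument is the same.
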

        
        \begin{proof}
            Let $k \in  [1,\mu]$ be the index of the best individual in population $P$, then $f(BEST(P)) = f(P_{k})$. Since $P \subseteq \{P \cup \Call{Variate}{P}\}$ and the method \Call{Replace}{} is elitist. It is clear that $f(BEST(P \cup  \Call{Variate}{P})) \eqtriangle f(P_{k}) $. 
        \end{proof}
        
        
        \begin{cor}
            \label{cor:es-1}
            If conditions of proposition \ref{prop:es-1} and \ref{prop:es-2} are satisfied and \Call{Variate$_{s}$}{}  is optimal strictly bounded from zero then the method \Call{NextPop}{}$_{\mu + \lambda}$ is optimal strictly from zero.
        \end{cor}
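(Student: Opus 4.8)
The plan is to mirror the proof of Corollary \ref{cor:sa1}: reduce the statement to the elitism of \Call{NextPop}{}$_{\mu+\lambda}$ (Proposition \ref{prop:elitist-es}) together with the \textbf{VR} factorization $K_{ES}=K_{R}\circ K_{V}$ obtained in the corollary right after Proposition \ref{prop:es-4}. Recall that, in the sense of definitions 67 and 69 of \cite{GOMEZ201953}, a method is \emph{optimal strictly bounded from zero} when some constant $\delta>0$ satisfies $K(x,\varOmega_{\epsilon})\geq\delta$ for every $x\in\varOmega_{\epsilon}^{c}$ (and, for an elitist kernel, $K(x,\varOmega_{\epsilon})=1$ for every $x\in\varOmega_{\epsilon}$). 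The whole argument consists of propagating the constant $\delta$ coming from \Call{Variate$_{s}$}{} upward through \Call{NextSubPop}{}, then through \Call{Variate}{}, and finally through the elitist replacement.

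First I would push the bound through \Call{NextSubPop}{}. By Proposition \ref{prop:es-2} its kernel is $K_{\Call{NextSubPop}{}}=K_{\Call{Variate}{}_{s}}^{(t)}\circ K_{XOVER}\circ\pi_{\{1,\dots,\rho\}}\circ K_{\mathcal{P}}$, so \Call{Variate$_{s}$}{} is the \emph{last} kernel applied. Writing $\kappa$ for the composition of the preceding factors and using the kernel-multiplication identity (equation \ref{eq:multiplyKernels}), the hypothesis $K_{\Call{Variate}{}_{s}}^{(t)}(y,\varOmega_{\epsilon})\geq\delta$ gives
\[
K_{\Call{NextSubPop}{}}(x,\varOmega_{\epsilon})=\int K_{\Call{Variate}{}_{s}}^{(t)}(y,\varOmega_{\epsilon})\,\kappa(x,dy)\geq\delta ,
\]
uniformly in $x$ and independently of the distribution produced by marriage and crossover.

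Next I would transport the bound to \Call{Variate}{}. By Proposition \ref{prop:es-3}, $K_{\Call{Variate}{}}^{(t)}$ is the join of $\lambda$ conditionally independent copies of $K_{\Call{NextSubPop}{}}$, so the probability that at least one of the $\lambda$ children lands in $\varOmega_{\epsilon}$ --- equivalently, that \textsc{Best} of the generated subpopulation is an $\epsilon$-state --- is at least $1-(1-\delta)^{\lambda}\geq\delta>0$. Hence \Call{Variate}{} is itself optimal strictly bounded from zero. Finally, Proposition \ref{prop:elitist-es} guarantees that \Call{NextPop}{}$_{\mu+\lambda}$ has an elitist kernel: under $(\mu+\lambda)$-selection the best of $P\cup\Call{Variate}{P}$ always survives. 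Therefore, for $x\in\varOmega_{\epsilon}^{c}$, whenever \Call{Variate}{} produces a child in $\varOmega_{\epsilon}$ the best of the next population is an $\epsilon$-state, yielding $K_{ES}^{(t)}(x,\varOmega_{\epsilon})\geq\delta$; and for $x\in\varOmega_{\epsilon}$ elitism forces $K_{ES}^{(t)}(x,\varOmega_{\epsilon})=1$. By definitions 67, 69 and lemma 68 of \cite{GOMEZ201953} this is exactly the assertion that \Call{NextPop}{}$_{\mu+\lambda}$ is optimal strictly bounded from zero.

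The step I expect to be the main obstacle is the passage through the join in the third paragraph: one must check that the per-coordinate bound $\delta$ is genuinely uniform in the current state and that the join of the $\lambda$ \Call{NextSubPop}{} kernels is handled measurably and with the correct independence, exactly as in proposition 56 of \cite{GOMEZ201953}; only then does the estimate $1-(1-\delta)^{\lambda}$ for \textsc{Best} being an $\epsilon$-state follow rigorously.
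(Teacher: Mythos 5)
Your proof is correct and follows essentially the same route as the paper: the paper's own proof is a one-line appeal to definitions 67 and 69 and lemma 68 of \cite{GOMEZ201953} together with Proposition \ref{prop:elitist-es}, and your argument simply writes out what that appeal amounts to, namely propagating the lower bound $\delta$ from \textsc{Variate}$_{s}$ through the kernel composition and the $\lambda$-fold join and then invoking elitism of the $(\mu+\lambda)$ replacement. The only material you add beyond the paper is the explicit estimate $1-(1-\delta)^{\lambda}\geq\delta$ and the uniformity check, which the paper leaves implicit in its citations.
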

        \begin{proof}
            Follows from definition 67, lemma 68, and definition 69 of \cite{GOMEZ201953} and proposition \ref{prop:elitist-es} that establish that an elitist kernel is optimal strictly bounded from zero.
        \end{proof}
        
        \begin{thm}
            \label{thm:es-convergence}
            ($\mu / \rho + \lambda$)-ES will converge to the global optimum if methods \Call{PickParents}{} and \Call{Variate}{}$_{s}^{(t)}$ can be characterized by stationary or non-stationary Markov kernels and  \Call{Variate$_{s}$}{} is optimal strictly bounded from zero.
        \end{thm}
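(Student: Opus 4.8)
The plan is to mirror the chaining strategy already used for the simulated-annealing convergence theorem, reducing the claim to the two structural facts established for the $(\mu/\rho+\lambda)$ scheme and then feeding these into the general non-stationary machinery of Lemma~\ref{lem:(Gomez-1)} and Theorem~\ref{thm:Gomez-1}. No new estimate is needed; the work is in verifying that the hypotheses of the two prior results are met and, crucially, that the constant they supply is uniform in time.

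First I would invoke the corollary following Proposition~\ref{prop:es-4} to guarantee that, under the hypothesis that \Call{PickParents}{} and \Call{Variate}{}$_{s}^{(t)}$ admit (stationary or non-stationary) Markov kernels, the full step \Call{NextPop}{}$_{\mu+\lambda}$ is itself characterized by a \textbf{VR} non-stationary Markov kernel $K_{ES}^{(t)}=K_{R}\circ K_{V}^{(t)}$. This legitimizes treating the maintained sequence $(P_{t}\colon t\geq 0)$ as a non-stationary Markov process, so that the iterated-kernel recursion of equation~\ref{eq:NS-Kernel-Iteration} applies to $K_{ES}^{(t)}$.

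Second, I would assemble the two per-step bounds required by Lemma~\ref{lem:(Gomez-1)}. Elitism of the $+$ selection, established in Proposition~\ref{prop:elitist-es}, ensures that once the process enters an $\epsilon$-state it never leaves, i.e. $K_{t}(x,\varOmega_{\epsilon})=1$ for all $x\in\varOmega_{\epsilon}$ and all $t\geq 1$. The complementary lower bound $K_{t}(x,\varOmega_{\epsilon})\geq\delta>0$ on $\varOmega_{\epsilon}^{c}$ comes from Corollary~\ref{cor:es-1}: since \Call{Variate$_{s}$}{} is optimal strictly bounded from zero and the kernel hypotheses of Propositions~\ref{prop:es-1} and \ref{prop:es-2} hold, \Call{NextPop}{}$_{\mu+\lambda}$ inherits the \emph{optimal strictly bounded from zero} property (definitions~67 and 69 of \cite{GOMEZ201953}), which is precisely the assertion that such a $\delta$ exists. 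These are exactly the two conditions of Lemma~\ref{lem:(Gomez-1)}, so it yields $K^{(t)}(x,\varOmega_{\epsilon})\geq 1-(1-\delta)^{t}$ for every $t\geq 1$, and Theorem~\ref{thm:Gomez-1} then delivers complete convergence of $(D_{t})$ to zero, hence convergence to $f^{*}$ regardless of the initial distribution $p(\cdot)$.

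The step I expect to require the most care is the second one: because the kernels here are genuinely non-stationary, Lemma~\ref{lem:(Gomez-1)} needs a \emph{single} constant $\delta$ valid simultaneously for all $t\geq 1$, whereas the "strictly bounded from zero" property of \Call{Variate$_{s}$}{} must be read as supplying a time-uniform lower bound rather than a merely positive $\delta_{t}$ at each step. I would therefore make explicit that the bound inherited through Corollary~\ref{cor:es-1} is independent of $t$ (equivalently, bounded below away from zero over all generations); any residual $t$-dependence in the underlying variation bound would break the geometric estimate $1-(1-\delta)^{t}$ and would have to be controlled before the appeal to Theorem~\ref{thm:Gomez-1} is valid.
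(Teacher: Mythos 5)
Your proposal is correct and follows essentially the same route as the paper, whose proof is simply the citation of Theorem~\ref{thm:Gomez-1} and Corollary~\ref{cor:es-1}; you have merely made explicit the intermediate chain (VR kernel from the corollary after Proposition~\ref{prop:es-4}, elitism from Proposition~\ref{prop:elitist-es}, the lower bound from Corollary~\ref{cor:es-1}, then Lemma~\ref{lem:(Gomez-1)}) that the paper leaves implicit. Your closing remark that the constant $\delta$ must be uniform in $t$ for the geometric bound to hold is a legitimate point of care that the paper does not address explicitly, but it does not change the argument's structure.
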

        \begin{proof}
            Follows from theorem \ref{thm:Gomez-1} and  corollary \ref{cor:es-1}.
        \end{proof}

\section{Conclusion}

In this paper we have generalized the conditions of convergence to the global optimum from stationary to non-stationary Markov process that are present in the work of stochastic global optimization algorithms: a systematic approach proposed by \cite{GOMEZ201953}. We formalize some selection schemes to generalize the theory to cover as many variations of each algorithm as possible. Also, we formalized and characterized both simulated-annealing and evolutionary-strategies using the developed theory. There, we established which conditions must be fulfilled to achieve a global convergence in both algorithms. Our future work will concentrate on using the proposed approach to formalize as many stationary and non-stationary \textsc{SGoal} as possible, and  extending and developing the theory for several particular methods that can be considered \textsc{SGoal}s.

\bibliography{references}
\end{document}